\documentclass{amsart}

\pagestyle{plain}

\usepackage{graphicx}
\usepackage{amsfonts}
\usepackage{amssymb}
\usepackage{amsmath}
\usepackage{xcolor} 


\linespread{1}

\usepackage[hidelinks]{hyperref} 
\hypersetup{
    colorlinks=true,
    linkcolor=red,
    citecolor=blue,
    filecolor=red,
   urlcolor=red}


\newtheorem{theorem}{Theorem}

\newtheorem{lemma}{Lemma}

\newtheorem{corollary}{Corollary}
\newtheorem{fact}{Fact}

\newcommand{\GG}{\mathbf{G}}
\newcommand{\cc}{\mathfrak{c}}
\newcommand{\hcc}{\hat{\mathfrak{c}}}
\newcommand{\tcc}{\tilde{\mathfrak{c}}}
\newcommand{\node}[3]{\mathbf{[\boldsymbol{#3};\boldsymbol{#1},\boldsymbol{#2}]}}

\begin{document}

\title{On the closed Ramsey numbers $R^{cl}(\omega+n,3)$}
\author{Burak Kaya}
\address{Department of Mathematics, Middle East Technical University, 06800, \c{C}ankaya, Ankara, Turkey\\
}
\email{burakk@metu.edu.tr}

\author{Irmak Sa\u{g}lam}
\address{Department of Mathematics, Middle East Technical University, 06800, \c{C}ankaya, Ankara, Turkey\\
}
\email{saglam.irmak@metu.edu.tr}

\keywords{topological partition relations, Ramsey numbers}
\subjclass[2010]{03E02,03E10}

\begin{abstract} In this paper, we contribute to the study of topological partition relations for pairs of countable ordinals and prove that, for all integers $n \geq 3$,
\begin{align*}
R^{cl}(\omega+n,3) &\geq \omega^2 \cdot n + \omega \cdot (R(n,3)-n)+n\\
R^{cl}(\omega+n,3) &\leq \omega^2 \cdot n + \omega \cdot (R(2n-3,3)+1)+1
\end{align*}
where $R^{cl}(\cdot,\cdot)$ and $R(\cdot,\cdot)$ denote the closed Ramsey numbers and the classical Ramsey numbers respectively. We also establish the following asymptotically weaker upper bound
\[ R^{cl}(\omega+n,3) \leq \omega^2 \cdot n + \omega \cdot (n^2-4)+1\]
eliminating the use of Ramsey numbers. These results improve the previously known upper and lower bounds.
\end{abstract}

\maketitle

\tableofcontents

\section{Introduction}

Partition relations for cardinals and ordinals were first introduced and studied by Erd\H os and Rado in \cite{ErdosRado53} and \cite{ErdosRado56}. These notions were later generalized to topological spaces by Baumgartner in \cite{Baumgartner86}. Recently, Caicedo and Hilton continued this study and provided upper bounds for topological and closed Ramsey numbers for various pairs of countable ordinals in \cite{CaicedoHilton17}.

In this paper, we shall improve these bounds for the closed Ramsey numbers $R^{cl}(\omega+n,3)$. Before we state our main results, let us recall some basic definitions and results.

For a set $X$ and $k \in \omega$, we set $[X]^k=\{Y \subseteq X: |Y|=k\}$. Given ordinals $\alpha, \beta$ and $X \subseteq \alpha$, we say that $X$ is \textit{order-homeomorphic} to $\beta$ if there exists an order-isomorphism $f: X \rightarrow \beta$ which is also a homeomorphism (with respect to the order topologies.)

Let $\alpha$ and $\beta$ be ordinals. For an ordinal $\gamma$, one writes $\gamma \rightarrow (\alpha,\beta)^2$ if for every function $\cc: [\gamma]^2 \rightarrow \{0,1\}$, there exists a subset $X \subseteq \gamma$ such that
\begin{itemize}
\item $[X]^2 \subseteq \cc^{-1}(0)$ and $X$ is order-homeomorphic to $\alpha$, or
\item $[X]^2 \subseteq \cc^{-1}(1)$ and $X$ is order-homeomorphic to $\beta$.
\end{itemize}
The closed Ramsey number $R^{cl}(\alpha,\beta)$ is the least ordinal $\gamma$ such that $\gamma \rightarrow (\alpha,\beta)^2$ should such an ordinal exist at all.

For example, we have $R^{cl}(\omega+1,3)=\omega^2+1$ and $R^{cl}(\omega+2,3)=\omega^2 \cdot 2 + \omega + 2$. For details, see \cite[Theorem 4.1, Lemma 5.2 and Lemma 5.3]{CaicedoHilton17}. The following was also implicitly proven in \cite[Corollary 5.6]{CaicedoHilton17}.
\begin{fact}\cite{CaicedoHilton17}\label{upperbound} $R^{cl}(\omega+n,3) \leq \omega^2 \cdot (R(n-1,3)+1) + \omega \cdot (n-1) + n$ for every positive integer $n \geq 3$, where $R(\cdot,\cdot)$ denotes the classical Ramsey number.
\end{fact}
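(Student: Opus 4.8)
The plan is to prove the partition relation $\gamma\to(\omega+n,3)^2$ for $\gamma=\omega^2\cdot(R(n-1,3)+1)+\omega\cdot(n-1)+n$; since the bound is said to be only ``implicitly'' in \cite{CaicedoHilton17}, I will sketch a self-contained argument. Fix $\cc\colon[\gamma]^2\to\{0,1\}$. If $\cc$ has a $1$-homogeneous set of size $3$ there is nothing to prove, so assume not; we must produce a $0$-homogeneous $X\subseteq\gamma$ order-homeomorphic to $\omega+n$. Two elementary remarks drive everything. First, the absence of a $1$-homogeneous triple forces, for every $\xi<\gamma$, the $1$-neighbourhood $N_\xi:=\{\eta<\gamma:\cc(\{\eta,\xi\})=1\}$ to be $0$-homogeneous, since a $1$-colored pair inside $N_\xi$ would close a $1$-triangle with $\xi$. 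Second, a subset $Y$ of an ordinal is order-homeomorphic to $\omega+n$ exactly when $Y$ is a strictly increasing $\omega$-sequence whose supremum $z<\gamma$ lies in $Y$, followed by precisely $n-1$ points of $Y$ above $z$. Hence it suffices to build a $0$-homogeneous set $S\cup\{z\}\cup\{y_1<\dots<y_{n-1}\}$ with $S\cong\omega$, $\sup S=z<y_1$.

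I would first isolate the ``$\omega+1$-core'' by a localized form of the known equality $R^{cl}(\omega+1,3)=\omega^2+1$. Inside a column $C_j:=[\omega^2\cdot(j-1),\omega^2\cdot j)$ together with its cap $\mu_j:=\omega^2\cdot j$ (a copy of $\omega^2+1$), run the dichotomy: either infinitely many of the sub-blocks $[\omega^2(j-1)+\omega m,\omega^2(j-1)+\omega(m+1))$ contain a $0$-neighbour of $\mu_j$, in which case those neighbours form an $\omega$-sequence cofinal at $\mu_j$ which one thins to be $0$-homogeneous via $\omega\to(\omega,3)^2$ (or a $1$-triangle appears); or cofinitely many sub-blocks lie entirely inside $N_{\mu_j}$, so that $N_{\mu_j}$ — being $0$-homogeneous — contains a final segment of $C_j$ order-homeomorphic to $\omega^2$, hence a closed copy of $\omega+1$ (indeed of $\omega+n$, which would already finish). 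Either way each column $C_j$ yields a $0$-homogeneous $Y_j\cong\omega+1$ with top point $z_j\le\mu_j$. The point of the summand $\omega^2\cdot(R(n-1,3)+1)$ rather than a single $\omega^2$ is to have $R(n-1,3)+1$ such columns available at once.

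The Ramsey step combines them. Reserve the lowest column to supply the core $Y:=Y_1$ with top $z:=z_1$; after a finite-pigeonhole uniformization fixing a cofinal $0$-homogeneous $Y'\subseteq Y\setminus\{z\}$, extract from each remaining column $C_j$ ($j\ge 2$) a point $c_j>z$ that is $0$-connected to a cofinal part of $Y'$ and to $z$. Apply the definition of $R(n-1,3)$ to the $R(n-1,3)$ points $\{c_j:j\ge2\}$ with the coloring $\cc$: either a $1$-homogeneous triple (impossible) or a $0$-homogeneous $W=\{y_1,\dots,y_{n-1}\}$. A last finite-pigeonhole thinning of $Y'$ against $W$ makes each $y_i$ $0$-connected to the remaining sequence and to $z$ without moving $\sup=z$. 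Then $X:=(\text{thinned }Y')\cup\{z\}\cup W$ is $0$-homogeneous (core part inside $Y$, $W$ by Ramsey, cross pairs by construction) and order-homeomorphic to $\omega+n$ by the second remark; the trailing $\omega\cdot(n-1)+n$ in $\gamma$ is exactly the room needed to place $z$ and the $y_i$ once the columns are consumed.

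The main obstacle is this coordination — forcing the three families of pairs (within the sequence, among the $y_i$, and across) to be simultaneously $0$-colored while never destroying convergence of the sequence to its limit — and, most delicately, the extraction step: a candidate $c_j$ may be $1$-connected to a cofinal part of the core, and one must show that when this happens $C_j$ nonetheless yields a usable candidate (e.g.\ via its own $1$-neighbourhoods, which are $0$-homogeneous and, together with $z$, already contain a closed copy of $\omega+1$), or is itself $0$-homogeneous, or exposes a $1$-triangle — so that all $R(n-1,3)$ columns genuinely deliver candidates and the single Ramsey application suffices. Nailing this case analysis, and verifying that the localized $R^{cl}(\omega+1,3)=\omega^2+1$ really does apply column-by-column under the one global no-$1$-triangle hypothesis, is where the real calculation lies.
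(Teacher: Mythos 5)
The paper itself does not prove Fact \ref{upperbound}; it is quoted from Caicedo--Hilton, so the only in-paper comparison point is the proof of the stronger Theorem \ref{mainresult-upper}, which goes by a genuinely different route: pass to a skeleton on which the coloring is canonical and $\omega$-homogeneous, then do finite combinatorics on the functions $\tcc$ and $\hcc$. Your direct approach is sound in its easy parts --- the column-by-column dichotomy producing a red homogeneous closed copy of $\omega+1$ inside each copy of $\omega^2+1$ is correct, as is the observation that $1$-neighbourhoods are red homogeneous --- but the step you flag as ``where the real calculation lies'' is not a loose end; it is the entire content of the theorem, and as written two of your steps fail.

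First, the extraction step: nothing forces a column $C_j$ to contain a point $c_j>z$ that is $0$-connected both to $z$ and to cofinally many elements of $Y'$. A point above $z$ can be $0$-connected to $z$ yet $1$-connected to all but finitely many elements of $Y'$ without creating a blue triangle (the elements of $Y'$ are pairwise red, so being common $1$-neighbours of $c_j$ closes no triangle), and your fallback only reproduces another red closed copy of $\omega+1$ (namely $\{y\in Y':\cc(\{c_j,y\})=1\}\cup\{z\}$), not a usable candidate or a copy of $\omega+n$. Second, the ``last finite-pigeonhole thinning'' is false as stated: if each $y_i\in W$ is merely $0$-connected to a cofinal, i.e.\ infinite, subset of $Y'$, the $n-1$ such subsets need not have infinite intersection (already for $n-1=2$, one point red to the evens of the sequence and the other to the odds), so you cannot conclude that some subsequence converging to $z$ is red to all of $W$. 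Upgrading ``cofinally many'' to ``all but finitely many'' is exactly what the canonical-coloring machinery (condition (b) and the sets $F(\theta)^r_\ell$) delivers in the paper's Theorem \ref{mainresult-upper}, and what Caicedo--Hilton achieve by other means; without some such device the single Ramsey application does not assemble into a closed copy of $\omega+n$. A symptom of the mismatch is that your construction never uses the tail $\omega\cdot(n-1)+n$ of $\gamma$ at all --- the point $z$ and the $y_i$ you place all come from the columns --- so the architecture of your argument does not match the ordinal it is supposed to color.
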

On the other hand, no non-trivial lower bounds have been given for $R^{cl}(\omega+n,3)$. See the authors' own comments following the proof of \cite[Corollary 5.6]{CaicedoHilton17}. Our first main result is the following.
\begin{theorem}\label{mainresult-lower} For every positive integer $n \geq 3$, we have
\[ R^{cl}(\omega+n,3) \geq \omega^2 \cdot n + \omega \cdot (R(n,3)-n)+n\]
\end{theorem}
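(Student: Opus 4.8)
The plan is to establish the lower bound by producing, for each integer $n\ge 3$, an explicit $2$-colouring witnessing that $\delta:=\omega^2\cdot n+\omega\cdot(R(n,3)-n)+(n-1)$ does \emph{not} satisfy $\delta\rightarrow(\omega+n,3)^2$. Since $\gamma:=\omega^2\cdot n+\omega\cdot(R(n,3)-n)+n=\delta+1$ and the relation $\eta\rightarrow(\alpha,\beta)^2$ is monotone increasing in $\eta$ (restrict any $\cc\colon[\eta']^2\to\{0,1\}$ to $[\eta]^2$ when $\eta\le\eta'$), no ordinal $\le\delta$ arrows $(\omega+n,3)$, hence $R^{cl}(\omega+n,3)\ge\gamma$. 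So the whole task is to build one bad colouring on $\delta$: a graph $G=\cc^{-1}(1)$ on $\delta$ that is triangle-free and whose complement contains no set $X$ that is both a clique of $G^{c}$ and order-homeomorphic to $\omega+n$ (equivalently: no $G$-independent $X$ of order type $\omega+n$ whose $\omega$-th element is the supremum of its first $\omega$ elements).

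For the building blocks I would first fix, using the definition of the classical Ramsey number (since $R(n,3)-1<R(n,3)$), a graph $\Gamma$ on $R(n,3)-1$ vertices that is simultaneously triangle-free and has independence number $\le n-1$. Then I would view $\delta$ as a concatenation of intervals: $n$ ``blocks'' $A_1<A_2<\cdots<A_n$ of order type $\omega^2$, a block $B$ of order type $\omega\cdot(R(n,3)-n)$, and a top block $C$ of order type $n-1$; refine each $A_i$ into its $\omega$ many ``columns'' $A_{i,j}$ ($j<\omega$) of type $\omega$, and refine $B$ into its $R(n,3)-n$ columns $B_1,\dots,B_{R(n,3)-n}$. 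Single out the $R(n,3)-1$ ``markers'' $\sup A_1,\dots,\sup A_{n-1}$ together with $\sup B_1,\dots,\sup B_{R(n,3)-n}$, identified with the vertex set of $\Gamma$. The colouring $\cc$ assigns colour $1$ to a pair $\{\alpha,\beta\}$ (with $\alpha<\beta$) precisely when either (i) $\alpha$ and $\beta$ are markers adjacent in $\Gamma$, or (ii) $\beta$ is the right endpoint (supremum) of the column or block immediately containing $\alpha$ below it, or (iii) an analogous star rule links the $n-1$ points of $C$ and the endpoints $\sup A_i$ ($1\le i\le n$) to the elements they lie above; all other pairs get colour $0$. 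Triangle-freeness of $G=\cc^{-1}(1)$ I expect to be routine: a $1$-triangle would have to live inside $\Gamma$ (impossible by choice of $\Gamma$), or else two of its vertices would be forced by the ``star'' rules (ii)/(iii) into a common column or block and so joined in colour $0$, or it would mix a $\Gamma$-edge with a star in a way the interval/CNF structure forbids.

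The heart of the argument is ruling out a $0$-homogeneous order-homeomorphic copy $X=\{a_0<a_1<\cdots\}\cup\{t_1<\cdots<t_n\}$ of $\omega+n$, where $t_1=\sup_j a_j$. First I would record the elementary topological fact that in an ordinal of the form $\omega^2\cdot k+\omega\cdot\ell+m$ a strictly increasing sequence converging from below to a limit ordinal $p$ is eventually contained in the single column of $p$ (if the Cantor normal form of $p$ ends in $\omega\cdot b$ with $b\ge1$) or eventually contained in, and cofinal in, the single block of $p$ (if $p=\omega^2\cdot i$). Applying this to $(a_j)$ and invoking the star rules (ii)/(iii), one shows that $0$-homogeneity forces $t_1$ to be one of the markers: otherwise the sequence $(a_j)$ and its genuine limit $t_1$ are joined in colour $1$. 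Then I would climb the tail $t_1<t_2<\cdots<t_n$: each $t_\ell$ must again be a marker or one of the $n-1$ points of $C$, and $0$-homogeneity makes the $t_\ell$ pairwise non-adjacent in $\Gamma$; since $|C|=n-1$, at least one $t_\ell$ and in fact (after the bookkeeping) $n$ of the $t_\ell$'s are markers, giving a $\Gamma$-independent set of size $n$, contradicting $\alpha(\Gamma)\le n-1$. The point of the precise length of $\delta$ is exactly this count: the summand $\omega^2\cdot n$ guarantees the converging sequence exhausts at most $n$ blocks before landing $t_1$ on a marker, the summand $\omega\cdot(R(n,3)-n)$ supplies the remaining markers to realize the whole tail, and the $+(n-1)$ is too small for $C$ to absorb it.

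The main obstacle will be the verification just sketched, namely carrying out the case analysis on the possible locations of $t_1$ and of the tail points without being off by a column or a block: one must check that the star rules genuinely obstruct a colour-$0$ copy of $\omega$ converging onto any non-marker limit, and then account exactly for the interplay of the $n$ blocks, the $R(n,3)-n$ middle columns, and the $n-1$ top points so that the forced $\Gamma$-independent set has size \emph{precisely} $n$. By comparison the reduction via monotonicity, the triangle-freeness check, and the topological localization lemma for convergent sequences should be comparatively routine.
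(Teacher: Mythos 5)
Your overall plan coincides with the paper's proof: pass to $\delta=\omega^2\cdot n+\omega\cdot(R(n,3)-n)+(n-1)$, decompose $\delta$ into $n$ blocks of order type $\omega^2$, $K=R(n,3)-n$ columns of order type $\omega$, and a top piece of size $n-1$, place a triangle-free graph $\Gamma$ of independence number at most $n-1$ on $R(n,3)-1$ of the limit ordinals, and tether the remaining ordinals to those limits by ``star'' edges. However, your rules (ii) and (iii) are underspecified, and read literally they omit structure that the argument genuinely needs.

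First, the auxiliary edges that do the real work go \emph{from low limits to high non-limits}, not downward. The paper's coloring joins $\omega^2\cdot i$ to every isolated point of every \emph{later} $\omega^2$-block, joins every isolated point of each $\omega^2$-block to every column limit of that block and of every later $\omega^2$-block, and joins every isolated point of each $\omega^2$-block to every element of every later $\omega$-column and of the top finite set (this is $E_1\cup E_4$). When one analyses a red-homogeneous closed copy of $\omega+n$ whose $\omega$-th element is $\omega^2\cdot i$, exactly these edges force the tail $t_2<\dots<t_n$ onto the marked limits. Your rule (iii), which joins $C$ and $\sup A_i$ only to ``the elements they lie above,'' does not block a red-homogeneous set whose tail visits an isolated point of a later $\omega^2$-block or of a middle $\omega$-column; some edges from each $A_i$ upward into later column limits, later $\omega$-columns, and the top finite set are indispensable.

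Second, once $\sup A_1,\dots,\sup A_{n-1}$ are all joined to the same later isolated points, any $\Gamma$-edge between two of them yields a triangle together with such a point. The paper handles this by invoking $R(n-1,3)<R(n,3)-1$ and relabeling $\Gamma$ so that these $n-1$ markers form a $\Gamma$-independent set; your construction omits this normalization, and triangle-freeness genuinely fails without it. Both issues are fixable, and after repair your argument would track the paper's closely, but as stated the construction does not yet yield a proof.
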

Our second main result is a strengthening of Fact \ref{upperbound}. More precisely, we prove the following theorem, which, together with Theorem \ref{mainresult-lower}, shows that the correct coefficient of $\omega^2$ in $R^{cl}(\omega+n,3)$ is $n$.

\begin{theorem}\label{mainresult-upper} For every positive integer $n \geq 3$, we have
\[ R^{cl}(\omega+n,3) \leq \omega^2 \cdot n + \omega \cdot (R(2n-3,3)+1)+1\]
\end{theorem}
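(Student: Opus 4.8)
The plan is to prove the equivalent partition relation $\gamma \to (\omega+n,3)^2$ for $\gamma = \omega^2\cdot n + \omega\cdot(R(2n-3,3)+1)+1$, by contradiction. Fix a coloring $\cc\colon[\gamma]^2\to\{0,1\}$ with no $3$-element set $H$ satisfying $[H]^2\subseteq\cc^{-1}(1)$ (``no red triangle''); I must produce $X\subseteq\gamma$ order-homeomorphic to $\omega+n$ with $[X]^2\subseteq\cc^{-1}(0)$. The first step is a reformulation of the target: a set $X$ is order-homeomorphic to $\omega+n$ and $\cc^{-1}(0)$-homogeneous exactly when $X$ is a $\cc^{-1}(0)$-clique with an element $v$ satisfying $v=\sup(X\cap v)$ and with exactly $n-1$ elements above $v$; so, discarding surplus elements above $v$, it suffices to find a blue clique possessing a genuine self-limit point having at least $n-1$ elements above it. For $\xi<\gamma$ write $R(\xi)$, $B(\xi)$ for its red and blue neighborhoods; triangle-freeness of the red graph makes each $R(\xi)$ a blue clique, so by the reformulation no $R(\xi)$ contains such a configuration. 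This rigidity of red neighborhoods (for $n\ge3$, $R(\xi)$ has at most one self-limit point, with only boundedly many elements above it) is what keeps the $B(\xi)$ large and is invoked repeatedly.

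The construction splits $\gamma$ into the \emph{low part} $L=\omega^2\cdot n$ and the \emph{high part} $\gamma\setminus L$, of order type $\omega\cdot(R(2n-3,3)+1)+1$. In the low part I run an Erd\H os--Rado-style tree argument (the role of the $\node{\cdot}{\cdot}{\cdot}$-bookkeeping): each of the $n$ successive copies of $\omega^2$, together with its right endpoint, has order type $\omega^2+1=R^{cl}(\omega+1,3)$, hence contains a blue order-homeomorphic copy of $\omega+1$; pruning along the branches, I extract one ``thick'' candidate limit point $v$ (of the form $\omega^2\cdot i$) together with an infinite \emph{reservoir} $Q\subseteq B(v)\cap v$ of genuine limit points of $\gamma$, cofinal in $v$ and arranged so that it remains cofinal after the finitely many later thinnings, while the endpoints $\omega^2\cdot j$ with $i<j\le n$ are kept in hand. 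In the high part, the $R(2n-3,3)$ rank-one points $\omega^2\cdot n+\omega\cdot k$ ($1\le k\le R(2n-3,3)$) carry the induced coloring, which has no red triangle, so among them there is a blue clique of size $2n-3$; the surplus $2n-3$ rather than $n-1$ is exactly what absorbs the vertices turning out to be red to the top point $\omega^2\cdot n+\omega\cdot(R(2n-3,3)+1)$ or ``bad'' for the reservoir — and whenever such a bad vertex occurs, its red neighborhood is a blue clique large enough to finish the proof directly. A case analysis along these lines yields $n-1$ points $v<v_2<\dots<v_n$ forming a blue clique with $v$ and blue to a cofinal sub-reservoir $Q'\subseteq Q$. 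Finally $Q'$ has no red triangle, so $\omega\to(\omega,3)^2$ produces an infinite blue clique $A\subseteq Q'$, which is still cofinal in $v$; thus $A\cup\{v,v_2,\dots,v_n\}$ is a blue clique with self-limit point $v$ and $n-1$ elements above it, i.e. a blue order-homeomorphic copy of $\omega+n$, the desired contradiction.

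The main obstacle is making the two phases fit \emph{exactly} inside $\omega^2\cdot n+\omega\cdot(R(2n-3,3)+1)+1$: one must check that the low-part tree really does deliver the reservoir $Q$ using only $n$ levels of $\omega^2$ (this is where the coefficient $n$, matching Theorem \ref{mainresult-lower}, is forced), and that the high-part Ramsey argument together with the leftover endpoints and the top point produces the $n-1$ upper vertices from no more than $R(2n-3,3)$ rank-one blocks. The crux is the case analysis reconciling ``a vertex whose red neighborhood is a large blue clique'' with survival of the reservoir; keeping track of which alternative gives an immediate blue $\omega+n$ is where essentially all the work lies. A secondary technical point is that ``cofinal in $v$'' must be preserved under all intersections and thinnings, which is why $v$ is chosen of Cantor--Bendixson rank $2$ and $Q$ to consist of rank-one points, so that the closing application of $\omega\to(\omega,3)^2$ automatically returns a topologically convergent sequence rather than merely an order-type-$\omega$ one.
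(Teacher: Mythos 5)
Your proposal is a sketch, not a proof, and you say so yourself: ``A case analysis along these lines yields \dots'' and ``The crux is the case analysis \dots; keeping track of which alternative gives an immediate blue $\omega+n$ is where essentially all the work lies.''\ That case analysis \emph{is} the theorem.\ Nothing in the proposal actually carries it out, so there is no argument to verify.\ Beyond that, the approach you outline is genuinely different from the paper's, and in a way that makes the missing work harder, not easier.\ The paper does not run an on-the-fly Erd\H{o}s--Rado tree argument over an arbitrary coloring; it first invokes Mermelstein's canonicalization (Fact~\ref{mermelcanonical}) together with Lemma~\ref{omegahomcolor} to replace the coloring by an $\omega$-homogeneous \emph{canonical} one, after which the entire combinatorics collapses to the finite data $\hcc,\tcc$.\ All of Lemmas~\ref{notrianglelemma}--\ref{mainlemma-2} are statements about this finite object, and Lemma~\ref{omegaplusnlemma2} is the precise form of your ``reservoir'' heuristic: a finite family of pairwise-red $L$-nodes with the right $\tcc$-values to a block $k$ automatically yields a red closed $\omega+n$.\ Your ``reservoir $Q$ of rank-one points cofinal in $v$, surviving all later thinnings'' is exactly what canonicalization buys you for free and uniformly; doing it by hand, across $n$ stacked copies of $\omega^2$ and then across the $R(2n-3,3)$ rank-one blocks, is where your sketch would need the bulk of the bookkeeping you omit.

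Two concrete gaps beyond ``the case analysis is missing.''\ First, the extraction of the reservoir: you assert that each $\omega^2+1$ block ``has order type $R^{cl}(\omega+1,3)$, hence contains a blue copy of $\omega+1$'' and that you can then prune to get a limit point $v=\omega^2\cdot i$ with $Q\subseteq B(v)\cap v$ cofinal.\ But the rank-one points of that block need not be blue (in your convention, $\cc^{-1}(0)$) to $v$ cofinally; this is precisely the dichotomy that the paper isolates in Lemma~\ref{omegasquaredlevels}, and you have not addressed it.\ Second, your account of how $R(2n-3,3)$ is spent is vague and appears to differ from the paper's: the paper takes the $2n-3$ pairwise-red $L$-nodes and applies a clean two-way pigeonhole on the single value $\tcc(\cdot\,;X)$, then shows that \emph{both} resulting cases violate Lemma~\ref{omegaplusnlemma2} (using Lemma~\ref{mainlemma-2} in one case).\ Your proposal instead wants $2n-3$ to ``absorb the vertices turning out to be red to the top point or `bad' for the reservoir,'' which is a different split and, without a count of how many can go bad in each way, does not obviously leave $n-1$ usable vertices.\ You also have the colors reversed relative to the paper (you call $\cc^{-1}(1)$ ``red''); that is harmless internally, but worth noting so the comparison is not misread.\ In short: the high-level decomposition into low part and high part is the same in spirit, and the reservoir intuition matches Lemma~\ref{omegaplusnlemma2}, but by declining to canonicalize you have taken on a much heavier combinatorial burden and then not discharged it.
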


It is well-known that the Ramsey numbers $R(n,3)$ have asymptotic order of magnitude $n^2 / \ln(n)$, see \cite{Kim95}. On the other hand, the exact computation of $R(n,3)$ is a notorious combinatorial problem. For this reason, we also prove the following (asymptotically weaker) result to get rid of these Ramsey numbers.

\begin{theorem}\label{mainresult-upper-2} For every positive integer $n \geq 3$, we have
\[ R^{cl}(\omega+n,3) \leq \omega^2 \cdot n + \omega \cdot (n^2-4)+1\]

\end{theorem}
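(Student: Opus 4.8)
The plan is to deduce Theorem \ref{mainresult-upper-2} from Theorem \ref{mainresult-upper} by showing that $R(2n-3,3) \leq n^2 - 5$ for all $n \geq 3$, so that $\omega \cdot (R(2n-3,3)+1) \leq \omega \cdot (n^2-4)$ and the displayed bound follows immediately from ordinal arithmetic. So the real content is a purely finite-combinatorial estimate on the classical Ramsey numbers $R(m,3)$, applied to $m = 2n-3$.

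First I would recall (or re-derive) the standard recursive inequality $R(m,3) \leq R(m-1,3) + m$, which comes from the usual Erd\H{o}s--Szekeres neighborhood argument: in a $2$-coloring of $[R(m-1,3)+m]^2$ with no red triangle, fix a vertex $v$; its red-neighborhood has size at most $R(m-1,3)-1$ (else a red edge inside it gives a red triangle — wait, one must be careful: actually the red neighborhood being large forces a blue $K_m$ or a red $K_2$, hence with $v$ a red triangle), and its blue-neighborhood, if of size $\geq R(m-1,3)$, yields either a blue $K_{m-1}$ (which with $v$ is a blue $K_m$) or a red triangle. Counting gives the recursion. Telescoping from a small base case such as $R(3,3)=6$ then yields a quadratic upper bound; the cleanest route is to prove by induction on $m$ that $R(m,3) \leq \binom{m+1}{2}$ for $m \geq 2$, i.e. $R(m,3) \leq \frac{m(m+1)}{2}$, using $R(2,3)=3 = \binom{3}{2}$ as the base and the recursion for the inductive step.

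Then I would substitute $m = 2n-3$ into the bound $R(m,3) \leq \frac{m(m+1)}{2}$ to get $R(2n-3,3) \leq \frac{(2n-3)(2n-2)}{2} = (2n-3)(n-1) = 2n^2 - 5n + 3$. Unfortunately this quadratic is too large — it exceeds $n^2 - 5$ — so the crude $\binom{m+1}{2}$ bound does not suffice, and I would instead need the sharper known values or the sharper bound $R(m,3) \leq \frac{m^2+3}{2}$ (valid for larger $m$, from the improved recursion $R(m,3) \leq R(m-2,3) + 2m - 2$ together with parity bookkeeping, or simply by invoking the table of small Ramsey numbers: $R(4,3)=9$, $R(5,3)=14$, $R(6,3)=18$, $R(7,3)=23$, $R(8,3)=28$, $R(9,3)=36$, and in general $R(m,3) \leq m^2-m$ or better for the relevant range). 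Checking the inequality $R(2n-3,3) \leq n^2-5$ for the small cases $n = 3,4,5,6$ by hand against the known values (e.g. $n=3$: $R(3,3)=6 \leq 4$ is \emph{false}!), one sees that even the target inequality as I phrased it fails for small $n$, which means the correct reduction must be more delicate — presumably one does not bound $R(2n-3,3)+1$ by $n^2-4$ directly but rather re-runs the argument of Theorem \ref{mainresult-upper} with the Ramsey-number step replaced by its combinatorial content, extracting the $n^2-4$ directly from the structure of the proof rather than from a black-box Ramsey estimate.

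Hence the main obstacle — and the step I expect to require genuine care — is that Theorem \ref{mainresult-upper-2} is probably \emph{not} a formal corollary of Theorem \ref{mainresult-upper} via a Ramsey-number inequality, but rather a parallel theorem obtained by revisiting the upper-bound construction: wherever the proof of Theorem \ref{mainresult-upper} invokes $R(2n-3,3)$ to find a large monochromatic set, one instead uses an explicit pigeonhole/neighborhood count that directly produces the quantity $n^2 - 4$ (or something bounded by it) without ever passing through the optimal Ramsey number. So my plan is: (1) isolate the exact place in the Theorem \ref{mainresult-upper} argument where $R(2n-3,3)$ enters; (2) replace the ``apply Ramsey's theorem'' step with the one-step Erd\H{o}s--Szekeres neighborhood bound, which in this triangle-free ($\beta = 3$) setting says that a red-triangle-free graph on $N$ vertices with no blue $K_k$ has $N$ bounded by an explicit quadratic in $k$ (iterating the neighborhood argument once, not all the way down); (3) verify that the resulting explicit bound, when plugged back into the ordinal computation, yields coefficient $n^2 - 4$ for $\omega$; and (4) handle any small-$n$ discrepancies separately, possibly using the already-known exact values $R^{cl}(\omega+1,3)$ and $R^{cl}(\omega+2,3)$ cited from \cite{CaicedoHilton17} if the uniform argument needs $n$ large. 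The delicate part is step (3): making the bookkeeping in the ordinal arithmetic match the combinatorial count exactly, since an off-by-a-constant error in the finite estimate translates into a wrong coefficient of $\omega$.
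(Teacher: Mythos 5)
There is a genuine gap: your proposal correctly diagnoses that Theorem \ref{mainresult-upper-2} cannot be deduced from Theorem \ref{mainresult-upper} via an inequality $R(2n-3,3)\leq n^2-5$ (indeed $R(3,3)=6>4$ already kills this for $n=3$, and asymptotically $R(2n-3,3)\sim 4n^2/\ln n$ exceeds $n^2$), but the repair you sketch in steps (1)--(4) would not work and is not what the paper does. A ``one-step Erd\H{o}s--Szekeres neighborhood bound'' applied to the red-homogeneity graph on the tail blocks $L_{n+1},\dots,L_{n+K}$ cannot force $K\leq n^2-4$: triangle-freeness plus the absence of a large independent set only bounds $K$ by a quantity of order $R(2n-3,3)$, which is the bound you are trying to beat. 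The paper's actual argument does not bound the number of tail blocks by the internal Ramsey structure of the tail at all. Instead it exploits the fact that (after reducing to the case $i=n$) \emph{every} tail block $L_j$ must satisfy $\tcc(L_j;Y)=1$ for some $Y\in\{A_1,L_1,\dots,L_{n-1}\}\cup\{X\}$, since otherwise $L_j$ could be appended to the red closed copy of $\omega+(n-1)$ built on $\mathbf{[\boldsymbol{\gamma};A_1]}\cup\{\omega^2,\dots,\omega^2\cdot(n-1)\}$. The tail blocks are then stratified into sets $W_{n-1},\dots,W_1,W_X$ according to the largest index $k$ with $\tcc(L_j;A_k)=1$ or $\tcc(L_j;L_k)=1$, and each stratum is bounded separately: $|W_k|\leq 2k$ for $k\leq n-2$, $|W_{n-1}|\leq 2n-4$, and $|W_X|\leq n-2$, each bound coming from a pigeonhole that extracts $k+1$ (or $n-1$) pairwise red singletons violating Lemma \ref{omegaplusnlemma2}. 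Summing gives $n^2-4$, and a final refinement (the stratum containing $\overline{X}$ is nearly empty by Lemma \ref{mainlemma-2}) drops this to $n^2-5$, contradicting $K=n^2-4$.

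In short, the quantity $n^2-4$ arises as $\sum_{k=1}^{n-2}2k+(2n-4)+(n-2)$ from a layered counting against the $n$ blocks of rank $2$, not from any bound on classical Ramsey numbers, and your proposal stops exactly where this counting would need to begin. You also omit the preliminary case split showing $i<n$ is impossible (via Corollary \ref{maincorollary} applied to block $i+1$), which is what licenses the assumption $i=n$ underlying the whole stratification.
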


Even though this upper bound is asymptotically worse than that of Theorem \ref{mainresult-upper}, one can check that it is indeed better for small $n$ values, at least, for $3 \leq n \leq 7$. This paper is organized as follows.

In Section 2, we shall briefly recall the basic definitions and notions that are used in the proofs, most of which appeared in \cite{Mermelstein19}. For the self-containment of this paper, we briefly include this background material.

In Section 3, we will prove Theorem \ref{mainresult-lower} by constructing a special triangle-free graph on a partition of $\omega^2 \cdot n + \omega \cdot (R(n,3)-n)+(n-1)$ that induces a coloring witnessing this lower bound.

In Section 4, we shall first prove a sequence of technical lemmas regarding special types of colorings that are introduced in Section 2. We will then prove Theorem \ref{mainresult-upper}.

In Section 5, using the ideas that are employed in Section 4, we will prove Theorem \ref{mainresult-upper-2} by an argument which is a variation of the proof of Theorem \ref{mainresult-upper}.

\textbf{Acknowledgements.} This paper is a part of the second author's master's thesis written under the supervision of the first author at the Middle East Technical University. The authors would like to thank Omer Mermelstein for his comments on an early draft of this paper as well as his clarifications regarding canonical colorings.

\section{Preliminaries}

\subsection{Basic terminology and definitions} In this subsection, we shall recall some basic terminology and definitions that are used throughout this paper.

Let $\gamma$ be an ordinal. A function $\cc: [\gamma]^2 \rightarrow \{0,1\}$ is called a \textit{coloring} of $\gamma$ with two colors. A subset $X \subseteq \gamma$ is said to be \textit{homogeneous of color $i$} if we have $[X]^2 \subseteq \cc^{-1}(i)$. For simplicity, we shall say that $X$ is a \textit{red (respectively, blue) homogeneous closed copy of $\alpha$} if $X$ is order-homeomorphic to $\alpha$ and is homogeneous of color $0$ (respectively, $1$.)

It is well-known that every non-zero ordinal $\gamma$ can uniquely be written as $$\gamma=\omega^{\beta_1}+\omega^{\beta_2}+\dots+\omega^{\beta_n}$$ where $\beta_1 \geq \beta_2 \geq \dots \geq \beta_n$ are ordinals. For every ordinal $\alpha \leq \gamma$, we set
$$CNF_{\gamma}(\alpha)=\min\{1 \leq k \leq n\ |\ \alpha \leq \omega^{\beta_1}+\omega^{\beta_2}+\dots+\omega^{\beta_k}\}$$
By regrouping the terms together, we can also uniquely express $\gamma$ as
$$ \gamma=\omega^{\alpha_1} \cdot k_1 + \omega^{\alpha_2} \cdot k_2 + \dots + \omega^{\alpha_m} \cdot k_m$$
where $\alpha_1 > \alpha_2 > \dots > \alpha_m$ are ordinals and $k_1,k_2,\dots,k_m < \omega$. In this case, we define \textit{the Cantor-Bendixson rank of $\gamma$} as the ordinal $CB(\gamma)=\alpha_m$ and define $L(\gamma)=k_m$. To avoid trivialities, we also define $CB(0)=0$ and $L(0)=1$.

Next will be defined an ordering on ordinals which first appeared in \cite{CaicedoHilton17}. Consider the relation $<^*$ on the class of ordinals given by
\[ \beta <^* \alpha \text{ if and only if } \alpha=\beta+\omega^{\theta} \text{ for some } \theta>CB(\beta)\]
for all ordinals $\alpha,\beta$. We will also write $\beta \triangleleft^* \alpha$ if $\alpha$ is the unique immediate successor of $\beta$ with respect to the relation $<^*$. For later use, we define the sets
\[T(\alpha)=\{\beta: \beta <^* \alpha\} \cup \{\alpha\}\]
\[T^{=k}(\alpha)=\{\beta \in T(\alpha): CB(\beta)=k\}\]
For a graphical representation of the relation $<^*$ on the ordinal $\omega^2 \cdot n + \omega \cdot K+1$ as a forest, see Figure \ref{figurelargesets}.

Before we define special types of colorings, we need to recall the following definition that first appeared in \cite[Section 2]{Mermelstein19}. A \textit{skeleton} of an ordinal $\gamma < \omega^{\omega}$ is a subset $I \subseteq \gamma$ such that
\begin{itemize}
\item $I$ is order-homeomorphic to $\gamma$ and
\item For all $x,y \in I$, $x <^* y$ if and only if $\rho(x) <^* \rho(y)$, where $\rho: I \rightarrow \gamma$ is the unique order-homeomorphism.
\end{itemize}

For two sets of ordinals $I \subseteq J$, we say that $I$ is a skeleton of $J$ if $\rho[I]$ is a skeleton of $ord(J)$, where $ord(J)$ denotes the order-type of $J$ and $\rho: J \rightarrow ord(J)$ is the unique order-preserving bijection. Given a skeleton $I \subseteq \gamma$ and a coloring $\cc: [\gamma]^2 \rightarrow \{0,1\}$, we define \textit{the induced coloring of $\cc$ with respect to $I$} as the coloring $\cc_{I}: [\gamma]^2 \rightarrow \{0,1\}$ given by $$\cc_{I}(\{\alpha,\beta\})=\cc(\{f(\alpha),f(\beta)\})$$
where $f: \gamma \rightarrow I$ is the unique order-homeomorphism. It is straightforward to check that the image of any homogeneous closed copy of $\theta$ in $\gamma$ with respect to $\cc_{I}$ under the map $f$ is a homogeneous closed copy of $\theta$ in $I \subseteq  \gamma$ with respect to $\cc$. We shall later use this observation to assume without loss of generality that our colorings have special properties.

\newpage

\begin{figure}
  \includegraphics[width=\textwidth,height=\textheight,keepaspectratio]{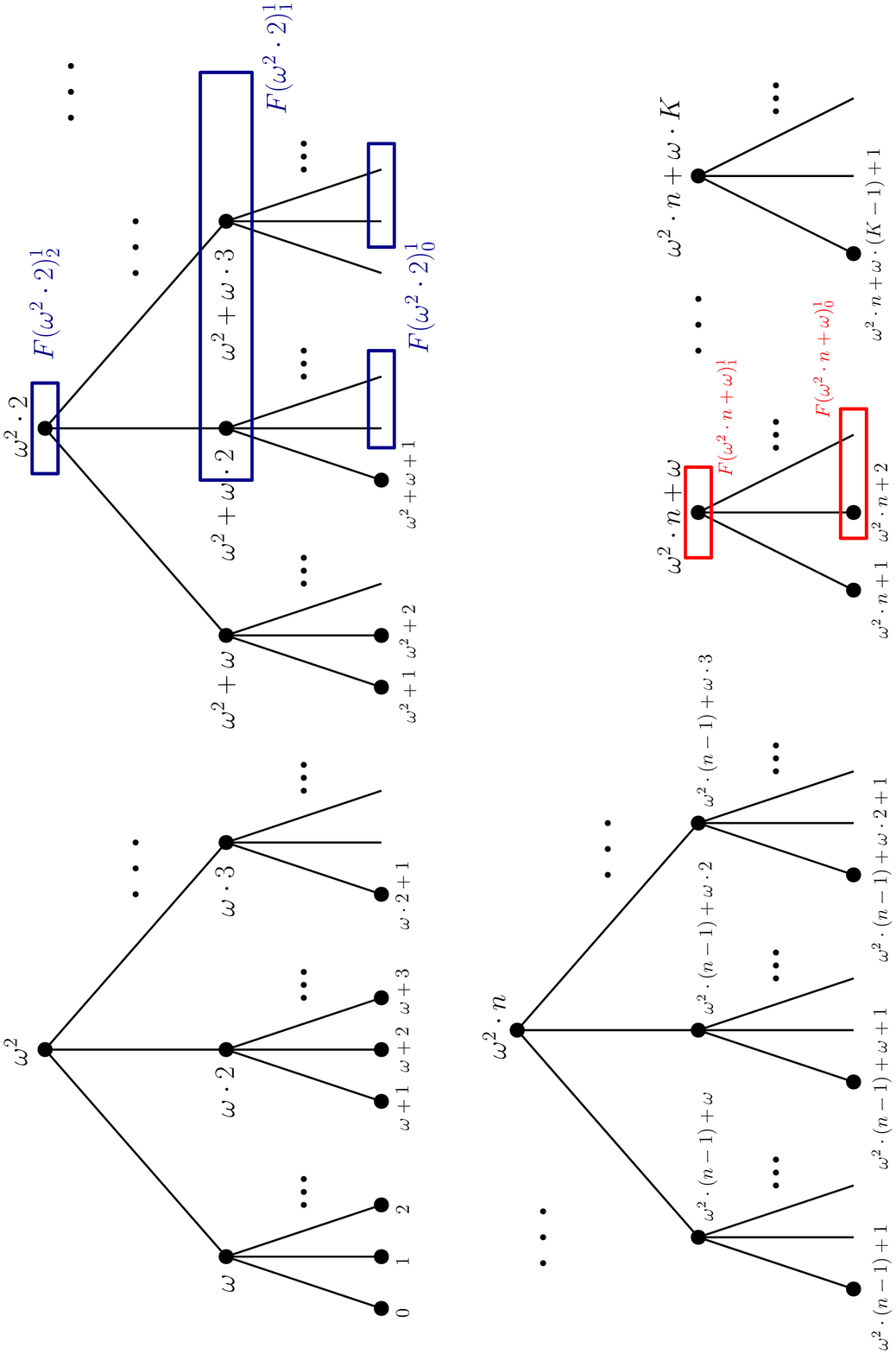}
  \caption{A representation of $<^*$ on the ordinal $\omega^2 \cdot n + \omega \cdot K +1$.}
  \label{figurelargesets}
\end{figure}

\newpage

Let $k,r \in \mathbb{N}$. For each $0 \leq m \leq k$, the sets $F(\omega^k)^r_m$ are defined recursively as follows.
\[F(\omega^k)^r_m=\begin{cases} \{ \omega^k\} & \text{ if } m=k,\\ 
\bigcup_{\alpha \in F(\omega^k)^r_{m+1}} \{ \beta \in \omega^k :\ \beta \triangleleft^* \alpha,\ \ L(\beta)>r  \}& \text{ if } m< k. \end{cases} \]
This definition is extended to all ordinals less than $\omega^\omega$ as follows. For every $\theta<\omega^\omega$ with $CB(\theta) = 0$, we set $F(\theta)^r_0=\{\theta\}$ and for every $\theta<\omega^\omega$ with $CB(\theta) \neq 0$, we define
\[ F(\theta)^r_m=\rho^{-1}\left[F\left(\omega^{CB(\theta)}\right)^r_m\right]\]
where $\rho: \{\alpha: \alpha <^* \theta\} \cup\{\theta\} \rightarrow \omega^{CB(\theta)}+1$ is the unique order preserving map. An illustration of these sets inside the ordinal $\omega^2 \cdot n + \omega \cdot K + 1$ is given in Figure \ref{figurelargesets}.

\subsection{Special types of colorings} In this subsection, we shall define the notions of an $\omega$-homogeneous coloring, a normal coloring and a canonical coloring. The latter two notions first appeared in \cite{Mermelstein19}.

A coloring $\mathfrak{c}: [\gamma]^2 \rightarrow \{0,1\}$ is said to be \textit{$\omega$-homogeneous} if for all $\alpha < \gamma$ there exists $c_{\alpha}\in\{0,1\}$ such that $\mathfrak{c}(\{\beta,\theta\})=c_{\alpha}$ for all $\beta,\theta \triangleleft^* \alpha$. In other words, an $\omega$-homogeneous coloring is a coloring for which the children of each node in the tree representation of $\gamma$ with respect to $<^*$ form a homogeneous copy of $\omega$.

\begin{lemma}\label{omegahomcolor} Let $\gamma < \omega^{\omega}$ be an ordinal. For every coloring $\mathfrak{c}: [\gamma]^2 \rightarrow \{0,1\}$ there exists a skeleton $I \subseteq \gamma$ such that $\cc_{I}: [\gamma]^2 \rightarrow \{0,1\}$ is $\omega$-homogeneous.
\end{lemma}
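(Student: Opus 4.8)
The plan is to build the skeleton $I$ by a finite recursion on the Cantor--Bendixson structure of $\gamma$, using the freedom to re-choose, inside each block order-homeomorphic to a smaller ordinal, an infinite subsequence on which a given color is constant. Recall that a skeleton must itself be order-homeomorphic to $\gamma$ and must preserve the relation $<^*$ under the canonical collapse; the induced coloring $\cc_I$ is then $\cc$ transported along the order-homeomorphism $\gamma \to I$, so it suffices to find $I$ for which, for every node $\alpha$, the set of $<^*$-children of the collapse of $\alpha$ is monochromatic in $\cc$ restricted to $I$. Since $\gamma < \omega^\omega$, the tree representation of $\gamma$ with respect to $<^*$ has finite height $CB(\gamma)+1$, and each node has either no children or $\omega$-many children; this finiteness of the height is what makes a finite recursion suffice.

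First I would set up the recursion from the top of the tree downward. At the root level (nodes of Cantor--Bendixson rank $CB(\gamma)$, of which there are finitely many, namely $L(\gamma)$ many if $\gamma = \omega^{CB(\gamma)}\cdot L(\gamma)+\dots$) the collection of $<^*$-children is a copy of $\omega$; by the infinite pigeonhole principle applied to the $\binom{\omega}{2}$-many pairs colored by $\cc$ --- more precisely, by Ramsey's theorem $\omega \to (\omega)^2_2$ --- I can thin this copy of $\omega$ to an infinite monochromatic subset. I retain those children (in their original order, so the thinned set is still order-homeomorphic to $\omega$ and still a $<^*$-chain of children) and discard the rest together with everything $<^*$-above the discarded children; crucially, discarding a tail of children and their subtrees does not change the order-homeomorphism type of the block, because a convergent $\omega$-sequence remains a convergent $\omega$-sequence after passing to an infinite subsequence. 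Then I recurse into each retained child: its subtree is order-homeomorphic to $\omega^{CB(\text{child})}$ with $CB(\text{child}) < CB(\gamma)$, so by the induction hypothesis (on Cantor--Bendixson rank) there is a skeleton of that subtree making the induced coloring $\omega$-homogeneous. Assembling the root nodes, the thinned sets of children, and the recursively-obtained skeletons of the child subtrees yields $I$.

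The key technical checks, which I would present carefully but without belaboring, are: (i) that the resulting $I$ is genuinely order-homeomorphic to $\gamma$ --- this follows because at each node we only passed to a cofinal (indeed, initial-segment-of-a-subsequence) subset of the $\omega$-many children, and a countable well-ordered set built from blocks each order-homeomorphic to the prescribed ordinal, glued in the prescribed way, is order-homeomorphic to $\gamma$; (ii) that the collapse $\rho: I \to \gamma$ preserves $<^*$ in both directions, which holds because we never permuted nodes and never merged distinct subtrees, so the parent--child and more generally the $<^*$ structure is exactly the induced substructure; and (iii) that $\cc_I$ is $\omega$-homogeneous, which is immediate from the construction since for every $\alpha$ the $<^*$-children of the collapse-preimage were chosen monochromatic.

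The main obstacle I anticipate is not the combinatorics --- the pigeonhole step is routine --- but the bookkeeping needed to verify (i) and (ii) simultaneously: one must be sure that when thinning the children at a high node one does not inadvertently destroy the topological closure structure lower down, and that the recursive skeletons of the child subtrees, which live inside $\gamma$ as subsets of intervals that are themselves being thinned, still fit together into a skeleton of all of $\gamma$ rather than merely a skeleton of a proper sub-ordinal. This is handled by doing the recursion strictly top-down (so that by the time we thin a block we have already fixed everything above it and are free inside the block), and by the observation quoted in the preliminaries that the image of a homogeneous closed copy under the order-homeomorphism $f: \gamma \to I$ is again such a copy, which lets us treat "skeleton of $J$" for $I \subseteq J$ purely via the collapse $\rho$ and reduces everything to the case $\gamma = \omega^k$ handled by the displayed recursive definition of $F(\omega^k)^r_m$-style thinning.
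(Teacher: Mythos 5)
Your proposal is correct and is essentially the paper's own argument: a recursion on Cantor--Bendixson rank that, at each node, uses the infinite Ramsey theorem to thin the $\omega$-many $<^*$-children to an infinite homogeneous subset and then recurses into the retained subtrees, with the general $\gamma<\omega^{\omega}$ handled by decomposing into blocks of the form $\omega^{m_i}+1$. The only slip is terminological (the roots of the $<^*$-forest have the various ranks $m_1,\dots,m_n$, not rank $CB(\gamma)$), and it does not affect the argument.
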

\begin{proof} We will first prove the result for ordinals of the form $\omega^k+1$.
Let $k \in \mathbb{N}$ and let $\mathfrak{c}: [\omega^k+1]^2 \rightarrow \{0,1\}$ be a coloring. For every $\delta \leq \omega^{k}$, we define $H({\delta}) \subseteq \delta+1$ inductively on the Cantor-Bendixson rank of $\delta$ as follows.
\begin{itemize}
\item If $CB(\delta)=0$, then we set $H({\delta})=\{\delta\}$.
\item If $CB(\delta)>0$, then choose some infinite homogeneous $J_{\delta} \subseteq \{\alpha: \alpha \triangleleft^* \delta\}$. Observe that such a set $J_{\delta}$ must exist by the infinite Ramsey theorem. Now set $H({\delta})=\{\delta\} \cup \bigcup_{\alpha \in J_\delta} H({\alpha})$.
\end{itemize}
A straightforward induction on $1 \leq i \leq k$ implies that, for all $\lambda \leq \omega^k$ such that $CB(\lambda)=i$, the set $H(\lambda)$ is a skeleton of $$\{\theta: \theta <^* \lambda \} \cup\{\lambda\}$$ Consequently, $I=H(\omega^k)$ is a skeleton of $\omega^k+1$. That $\cc_I$ is $\omega$-homogeneous trivially follows from the choice of $J_{\delta}$'s.

To finish the proof, let $\gamma < \omega^{\omega}$ be an ordinal. The claim clearly holds for $\gamma=0$. So suppose that $\gamma=\omega^{m_1}+\dots+\omega^{m_n}$ where $m_1 \geq \dots \geq m_n$ are natural numbers. Let $\mathfrak{c}: [\gamma]^2 \rightarrow \{0,1\}$ be a coloring. For each $1 \leq i \leq n$, consider the set
$$\{\omega^{m_1}+\dots+\omega^{m_{i-1}}+\alpha: \alpha \leq \omega^{m_i}\}$$
which is a copy of $\omega^{m_i}+1$. Let $I_i$ be a skeleton obtained by applying the argument above with the restriction of $\cc$ to this copy. Then it is easily verified that
\[ I=\left(\bigcup_{i=1}^{n} I_i\right)-\{\gamma\}\]
is a skeleton for which $\cc_I$ is $\omega$-homogeneous.
\end{proof}

We should mention that, our use of $\omega$-homogeneous colorings in the proofs is non-essential and is due to our not wanting to apply the infinite Ramsey theorem repeatedly.

We now recall the definition of a normal coloring. A coloring $\cc:[\gamma]^2 \rightarrow \{0,1\}$ is said to be \textit{normal} if for all $\beta_1 <^* \beta_2<\gamma$, the color $\mathfrak{c}(\{\beta_1,\beta_2\})$ solely depends on $CB(\beta_1)$, $CB(\beta_2)$ and $CNF_\gamma(\beta_2)$, that is, there is a function $\hcc$ independent of $\beta_1$ and $\beta_2$ such that, for all $\beta_1 <^* \beta_2<\gamma$, we have
\[
\mathfrak{c}(\{\beta_1, \beta_2\}) = \hcc(CNF_\gamma(\beta_2), CB(\beta_2), CB(\beta_1))
\]
In other words, within each connected component of the tree representation of $\gamma$ with respect to $<^*$, the color of a pair consisting of $<^*$-related elements depends only the levels of the nodes.

Next will be defined the notion of a canonical coloring. For our purposes, we shall only restrict our attention to successor ordinals. Suppose that $$\gamma=\omega^{m_1}+\omega^{m_2}+\dots+\omega^{m_n}+1$$ where $m_1 \geq m_2 \geq \dots \geq m_n$ are natural numbers. A coloring $\cc:[\gamma]^2 \rightarrow \{0,1\}$ is said to be \textit{canonical} if the following conditions are satisfied.
\begin{itemize}
\item[a.] $\cc$ is normal,
\item[b.] For all $\alpha < \gamma$, there exists $r \in \mathbb{N}$ such that for all $\theta<\gamma$ and ${\ell} \leq CB(\theta)$ there is a color $c_{\alpha}(\theta,{\ell}) \in \{0,1\}$ with
\[ \{\beta \in T^{={\ell}}(\theta):\ \mathfrak{c}(\{ \alpha, \beta \}) =c_{\alpha}(\theta,{\ell}) \} \supseteq F(\theta)^r_{\ell}\]
\item[c.] For all $\alpha,\beta<\gamma$ with $CNF_{\gamma}(\alpha)=CNF_{\gamma}(\beta)$ and $CB(\alpha)=CB(\beta)$, we have that \[c_{\alpha}(\omega^{m_1}+\dots+\omega^{m_k},{\ell})=c_{\beta}(\omega^{m_1}+\dots+\omega^{m_k},{\ell})\]
for all $1 \leq k \leq n$ with $k \neq CNF_{\gamma}(\alpha)$ and for all $0 \leq {\ell}\leq m_k$, where $c_{\alpha}(\cdot,\cdot)$ is as in Item b.
\end{itemize}
Consequently, for a canonical coloring $\cc:[\gamma]^2 \rightarrow \{0,1\}$, there exists a function $\tcc(i,j;k,\ell)$ defined for $1 \leq k \neq i \leq n$ and $0 \leq j \leq m_i$ and $0 \leq \ell \leq m_k$ such that
\[ \tcc(i,j;k,{\ell})=c_{\alpha}(\omega^{m_1}+\dots+\omega^{m_k},{\ell})\]
where $\alpha \in \gamma$ is any ordinal with $CNF_{\gamma}(\alpha)=i$, $CB_{\gamma}(\alpha)=j$. Observe that the following are equivalent for a canonical coloring $\cc:[\gamma]^2 \rightarrow \{0,1\}$.
\begin{itemize}
\item $\tcc(i,j;k,{\ell})=c$
\item For all $\alpha$ with $CNF_{\gamma}(\alpha)=i$ and $CB(\alpha)=j$, there exists $r \in \mathbb{N}$ such that $\cc(\{\alpha,\beta\})=c$ for every $\beta \in F\left(\omega^{m_1}+\dots+\omega^{m_k}\right)^r_{{\ell}}$.
\end{itemize}
In proofs, we shall use this equivalence whenever we need to use that $\tcc(i,j;k,{\ell})=c$. As was the case with $\omega$-homogeneous colorings, there always exist skeletons for which the induced colorings are canonical.
\begin{fact}\label{mermelcanonical}\cite[Proposition 3.11]{Mermelstein19} For every coloring $\mathfrak{c}: [\gamma]^2 \rightarrow \{0,1\}$ there exists a skeleton $I \subseteq \gamma$ such that $\mathfrak{c}_{I}: [\gamma]^2 \rightarrow \{0,1\}$ is canonical.
\end{fact}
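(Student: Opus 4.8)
I would construct the canonical skeleton as the end result of a \emph{finite} sequence of successive skeleton-refinements, exploiting the fact that a skeleton of a skeleton is again a skeleton and that the induced-coloring operation behaves well under this composition; thus each refinement only has to secure one of the three defining clauses while leaving intact those already arranged. Fix $\gamma=\omega^{m_1}+\cdots+\omega^{m_n}+1$ and work inside the forest representation of $\gamma$ under $\triangleleft^*$: its connected components are $\omega$-branching and order-homeomorphic to ordinals of the form $\omega^{k}+1$, with the Cantor--Bendixson rank serving as the level function (so that the root of a component sits at the top level), and any sub-forest that remains $\omega$-branching of full height in every component is automatically a skeleton of $\gamma$ with $<^*$ trivially preserved.

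\textbf{$\omega$-homogeneity and normality.} First apply Lemma \ref{omegahomcolor} to pass to a skeleton on which $\cc$ is $\omega$-homogeneous; this is a convenience that lets us henceforth treat sibling sets as monochromatic. Next we make $\cc$ normal. Since $\beta_1 <^* \beta_2$ forces $\beta_1,\beta_2$ into a common component with $\beta_2$ nearer the root, and since $CB$ records the level inside a component, normality asks exactly that the colour of a $<^*$-comparable pair depend only on the two levels involved and on which component we are in. As there are only finitely many ordered pairs of levels in each of the finitely many components, we deal with them one at a time: for a fixed level-pair, a finite iteration of the infinite Ramsey theorem — first thinning, for each node $\theta$ of the upper level, the set of its descendants of the lower level to a monochromatic set, then thinning the set of such $\theta$'s so that the resulting colour agrees for all of them — produces a sub-forest still $\omega$-branching of full height, hence still a skeleton, on which that level-pair is constant. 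Iterating over the finitely many level-pairs yields normality.

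\textbf{Clauses (b) and (c): the crux.} Fix $\alpha$. By normality the off-diagonal colouring $\beta\mapsto\cc(\{\alpha,\beta\})$ is already pinned down on the $<^*$-comparable part: if $\beta<^*\alpha$ the colour is $\hcc(CNF_\gamma(\alpha),CB(\alpha),CB(\beta))$, a function of $CB(\beta)$ alone, and symmetrically when $\alpha<^*\beta$. The first observation is that for a given node $\theta$ and level $\ell\le CB(\theta)$, taking the parameter $r$ large enough makes $F(\theta)^r_\ell$ disjoint from $\alpha$ and from the entire subtree $T(\alpha)$, since these are excluded as soon as $r$ exceeds the $L$-values occurring along the finite path from $\theta$ toward $\alpha$; hence it suffices to control $\cc(\{\alpha,\cdot\})$ on the $<^*$-\emph{incomparable} nodes of $T^{=\ell}(\theta)$. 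Clause (b) then becomes a tree-Ramsey statement: after a suitable refinement, for every $\theta$ and every $\ell\le CB(\theta)$ one of the two colour classes contains $F(\theta)^r_\ell$ for a single $r=r(\alpha)$. I would prove this by downward recursion on $\ell$ inside each subtree: at the bottom level it is the infinite Ramsey theorem applied to the siblings, and the recursion step promotes a homogeneous colour from a node's children to the node, the point being to retain at each stage a set of children that is simultaneously monochromatic for all the relevant off-diagonal colourings and has all $L$-values above a bound fixed once and for all — the nesting $F(\theta')^r_\ell\subseteq F(\theta)^r_\ell$ for sufficiently late children $\theta'$ of $\theta$ is precisely what allows one $r$ to serve all $\theta$ and $\ell$. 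Finally, for (c), there are only finitely many pairs $(CNF_\gamma(\alpha),CB(\alpha))$, so one further finite round of thinnings, applying the infinite Ramsey theorem to the finitely many functions that send an $\alpha$-type to $c_\alpha(\omega^{m_1}+\cdots+\omega^{m_k},\ell)$ at the component-roots, forces the required agreement; this again only discards siblings and so preserves skeletonhood.

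\textbf{Main obstacle.} The genuine difficulty lies entirely in the last stage: one must serve \emph{infinitely many} $\alpha$ with a \emph{single} skeleton while still extracting, for each $\alpha$ separately, one integer $r(\alpha)$ that works uniformly over all $\theta$ and $\ell$. Both issues are handled by building the skeleton through a carefully staged recursion — outermost on Cantor--Bendixson rank — in which the surviving children at each node are chosen monochromatic for all the pertinent off-diagonal colourings at once and with $L$-values pushed above the running bound, rather than by a naive sequential use of Ramsey's theorem; the technical heart is then verifying that the resulting set is indeed order-homeomorphic to $\gamma$ and $<^*$-preserving, and that the retained children do witness the containments $\{\beta\in T^{=\ell}(\theta):\cc(\{\alpha,\beta\})=c_\alpha(\theta,\ell)\}\supseteq F(\theta)^r_\ell$.
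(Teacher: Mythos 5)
The paper does not prove Fact~\ref{mermelcanonical} at all: it is imported wholesale from Mermelstein's Proposition~3.11, and the only original content the paper adds is the ``Important remark'' that the definition here permits $\ell \le CB(\theta)$ and $\ell \le m_k$ rather than strict inequalities, together with the assertion that Mermelstein's argument survives that change. So there is no in-paper proof to compare your attempt against; your proposal is an attempt to reprove an external result from scratch.

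Judged on its own terms, your sketch has the right overall skeleton (successive skeleton-refinements, each targeting one clause, using the composability of skeletons and the behaviour of induced colourings), and your observations in the ``crux'' paragraph are sound: that large $r$ makes $F(\theta)^r_\ell$ avoid $T(\alpha)$, so clause (b) is really a statement about $<^*$-incomparable pairs; and that the nesting $F(\theta')^r_\ell \subseteq F(\theta)^r_\ell$ for late children is what lets a single $r$ serve all $\theta,\ell$. But the proposal is not a proof. You explicitly flag the genuine difficulty --- that a single skeleton must simultaneously serve \emph{infinitely many} $\alpha$, each with its own uniform $r(\alpha)$ --- and then dispose of it by saying it ``is handled by building the skeleton through a carefully staged recursion,'' while deferring ``the technical heart'' to an unperformed verification. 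That staged recursion is precisely what the statement requires and precisely what is missing: a naive $\alpha$-by-$\alpha$ refinement would give an infinite descending chain of skeletons whose intersection need not be a skeleton, and you do not explain how to interleave the choices so that the recursion terminates in a single set that remains order-homeomorphic to $\gamma$ with $<^*$ preserved. The same gap infects clause (c): you say ``one further finite round of thinnings'' applied ``to the finitely many functions that send an $\alpha$-type to $c_\alpha(\cdot,\ell)$,'' but each ``type'' class is still infinite, and agreement within a class is exactly the nontrivial content of (c), not something obtained by pigeonholing over the finitely many classes. A smaller issue: at the bottom level of your clause-(b) recursion you invoke ``the infinite Ramsey theorem applied to the siblings,'' but the relevant colouring $\beta \mapsto \cc(\{\alpha,\beta\})$ is unary on the siblings, so this is pigeonhole, not Ramsey; harmless, but symptomatic of the sketchy level of detail. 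Finally, you never address the paper's own modification of the definition (allowing $\ell = CB(\theta)$ and $\ell = m_k$), which is the one point where the paper actually does make a claim beyond citing Mermelstein.
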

\noindent \textbf{Important remark.} In \cite{Mermelstein19}, the original definition of a canonical coloring only requires $\ell < CB(\theta)$ in Item b and $\ell < m_k$ in Item c. However, analyzing the proof of Fact \ref{mermelcanonical}, one sees that the proof still goes through for this modified definition. (Though, one needs to be careful while using this definition since $\ell$ cannot be $m_k$ for $k=n$ in the case that $\gamma$ is not successor and $\gamma=\omega^{m_1}+\omega^{m_2}+\dots+\omega^{m_n}$.)\\

We remark that it follows from Lemma \ref{omegahomcolor} and Fact \ref{mermelcanonical} that, in order to prove an inequality of the form $R(\alpha,\beta) \leq \gamma$, it suffices to prove that any $\omega$-homogeneous canonical coloring of $\gamma$ has a red homogeneous copy of $\alpha$ or a blue homogeneous copy of $\beta$. The reason is that, given a coloring $\cc:[\gamma]^2 \rightarrow \{0,1\}$, we can first find a skeleton $I \subseteq \gamma$ for which $\cc_I:[\gamma]^2 \rightarrow \{0,1\}$ is $\omega$-homogeneous and then, find a skeleton $J \subseteq \gamma$ for which $(\cc_I)_J: [\gamma]^2 \rightarrow \{0,1\}$ is both canonical and $\omega$-homogeneous. (For the latter claim, observe that the induced coloring of an $\omega$-homogeneous coloring with respect to a skeleton is $\omega$-homogeneous.) But then, any homogeneous subset of $\gamma$ with respect to $(\cc_I)_J$ can be pulled back to a homogeneous subset of $\gamma$ with respect to $\cc$ of the same order type.

Before we conclude this section, let us introduce some notation and state a lemma for later use. For an ordinal $\theta < \omega^{\omega}$, we define $\node{i}{j}{\theta}$ to be the set
\[ \node{i}{j}{\theta}=\{\alpha \in \theta: CNF_{\theta}(\alpha)=i,\  CB(\alpha)=j\} \]

\begin{lemma}\label{omegasquaredlevels} Let $n \geq 2$ be an integer and let $\mathfrak{c}: [\omega^2]^2 \rightarrow \{0,1\}$ be a normal $\omega$-homogeneous coloring with no red homogeneous closed copy of $\omega+n$ and no blue homogeneous closed copy of $\omega$. Then 
\begin{itemize}
\item[(a)] $\hcc(1,1,0)=1$ or
\item[(b)] For every $i < \omega$, we have that
\[ \{\beta \in \node{1}{1}{\omega^2} :\ \{\alpha \in W_i:\ \mathfrak{c}(\{\alpha,\beta\})=1\} \text{ is cofinal in } W_i\}\]
is cofinal in $\node{1}{1}{\omega^2}$, where $W_i=\{\omega \cdot i + m : 0 < m < \omega\}$.
\end{itemize}
\end{lemma}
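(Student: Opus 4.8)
The plan is to argue by contradiction, after first unwinding what the statement says about the structure of $\omega^2$. Under $<^*$ the ordinal $\omega^2$ decomposes as a disjoint union of ``stars'': the set $\node{1}{1}{\omega^2}$ is exactly the set of roots $\{\omega\cdot p:1\le p<\omega\}$, the children of the root $\omega\cdot(p+1)$ are precisely the elements of $W_p$ (together with $0$ when $p=0$), each star has depth one, and $\hcc(1,1,0)$ is the common colour of a pair consisting of a root and one of its own children. If $\hcc(1,1,0)=1$ we are in case (a); so assume $\hcc(1,1,0)=0$. Since $\mathfrak{c}$ is $\omega$-homogeneous and there is no blue closed copy of $\omega$, every child-set $W_p$ is red-homogeneous, and hence each star $\{\omega\cdot(p+1)\}\cup W_p$ is a red homogeneous closed copy of $\omega+1$. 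To prove (b) fix $i<\omega$ and a root $\omega\cdot p_0$; we must exhibit a root $\omega\cdot p>\omega\cdot p_0$ with infinitely many blue neighbours in $W_i$. Assume no such root exists, i.e.\ every root $\omega\cdot p\ge\omega\cdot p_0$ is red to all but finitely many elements of $W_i$; we derive a contradiction with the hypotheses.

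The roots form a discrete (hence closed) copy of $\omega$ in $\omega^2$, so by the infinite Ramsey theorem and the absence of a blue closed copy of $\omega$, any infinite set of roots contains an infinite ``red clique'' $C$ with $[C]^2\subseteq\mathfrak{c}^{-1}(0)$; the same holds for the red neighbourhood of any fixed root. The straightforward case is when $\omega\cdot(i+1)$ is red to infinitely many roots above it. Extract an infinite red clique $C$ inside that neighbourhood; then $\{\omega\cdot(i+1)\}\cup C$ is a red clique of roots with least element $\omega\cdot(i+1)$. Pick its members $\omega\cdot q_1<\dots<\omega\cdot q_{n-1}$ with all $q_t>\max(p_0,i+1)$, and let $W_i^{*}\subseteq W_i$ be the cofinite set of $\alpha$ with $\mathfrak{c}(\{\alpha,\omega\cdot q_t\})=0$ for every $t$ (a finite intersection of cofinite sets). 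Then $W_i^{*}\cup\{\omega\cdot(i+1)\}\cup\{\omega\cdot q_1,\dots,\omega\cdot q_{n-1}\}$ is order-homeomorphic to $\omega+n$: the sequence $W_i^{*}$ converges to $\omega\cdot(i+1)$, which is its only limit point, and the $n-1$ roots lying above it are isolated; and it is red-homogeneous by the choice of $W_i^{*}$, by $\hcc(1,1,0)=0$ for the pairs between $W_i^{*}$ and $\omega\cdot(i+1)$, and by the red-homogeneity of $W_i$ and of $C\cup\{\omega\cdot(i+1)\}$. This is a red closed copy of $\omega+n$, a contradiction.

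The remaining case---the main obstacle---is when $\omega\cdot(i+1)$ is blue to all but finitely many roots. One still gets, by the same Ramsey argument (no infinite set of roots lying above $\omega\cdot(i+1)$ can be pairwise blue, lest it yield, together with $\omega\cdot(i+1)$, a blue closed copy of $\omega$), an infinite red clique $C$ of roots; but to anchor a red copy of $\omega+n$ at the bottom star of $C$ one now needs to control the colours between the remaining roots of $C$ and the children of $\min C=\omega\cdot b$, i.e.\ one needs the failure of (b) at the index $b-1$ rather than at $i$. The plan is a descent: if (b) fails at $b-1$ then, since $\omega\cdot b$ is red to infinitely many roots, the construction of the previous paragraph (with $i$ replaced by $b-1$) builds a red closed copy of $\omega+n$; if instead (b) holds at $b-1$, this supplies cofinally many roots that are blue to infinitely many children of $\omega\cdot b$, and---iterating, and keeping in play the standing failure of (b) at $i$, the red-homogeneity of every star, and the non-existence of an infinite blue clique among the roots---one is pushed either to a red closed copy of $\omega+n$ built from a single star plus finitely many roots, or to an infinite blue clique. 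Organizing this descent so that it terminates (equivalently: guaranteeing that some red clique of roots produced along the way can be anchored at a star where (b) fails, or else explicitly producing a blue closed copy of $\omega$) is the delicate point, and is exactly where the two ``no copy'' hypotheses must be combined most carefully.
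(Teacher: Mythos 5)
Your Case 1 is correct and is, in essence, the paper's whole argument. Your Case 2, however, is a genuine gap, and you acknowledge it yourself: the ``descent'' is never shown to terminate, and with only the tool you permit yourself (Ramsey-extracting an infinite red clique from an arbitrary infinite set of roots) it cannot be made to terminate, because to anchor a red clique of roots at the star below its least element $\omega\cdot b$ you need the failure of (b) at the index $b-1$, which is exactly what the descent is trying to establish and which you have no way to force.

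The missing observation is that Case 2 never occurs. The set of roots $\node{1}{1}{\omega^2}=\{\omega\cdot p: 1\le p<\omega\}$ is not merely a set each of whose infinite subsets contains an infinite red clique; it is a single homogeneous set. These roots are the children of the top node of the forest, and $\omega$-homogeneity gives them one common colour. (As literally written the definition quantifies over $\alpha<\gamma$ and $\omega^2\not<\omega^2$, but in every application of the lemma the copy of $\omega^2$ is $\{\beta:\beta<^*\omega^2\cdot k\}$ inside a larger ordinal, so $\omega$-homogeneity applies to $\alpha=\omega^2\cdot k$; the paper's proof uses exactly this reading.) Since the roots form a discrete set of order type $\omega$, i.e.\ a closed copy of $\omega$, that common colour cannot be $1$; hence \emph{all} roots are pairwise red. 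In particular $\omega\cdot(i+1)$ is red to every root above it, your case distinction disappears, and the proof reduces to your Case 1, which is the paper's proof: the assumed failure of (b) at $i$ makes all but finitely many roots red to all but finitely many elements of $W_i$; choose $n-1$ such roots $\beta_1<\dots<\beta_{n-1}$ above $\omega\cdot(i+1)$, intersect the corresponding cofinite subsets of $W_i$ to get $H_i$, and note that $H_i\cup\{\omega\cdot(i+1),\beta_1,\dots,\beta_{n-1}\}$ is a red homogeneous closed copy of $\omega+n$, using $\hcc(1,1,0)=0$ for the pairs joining $H_i$ to $\omega\cdot(i+1)$.
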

\begin{proof} Assume towards a contradiction that $\hcc(1,1,0)=0$ and that, for some $i<\omega$,
\[ \{\beta \in \node{1}{1}{\omega^2} :\ \{\alpha \in W_i:\ \mathfrak{c}(\{\alpha,\beta\})=1\} \text{ is cofinal in } W_i\}\]
is not cofinal in $\node{1}{1}{\omega^2} \cong \omega$ and hence, is finite. Since $W_i \cong \omega$, the complement of this set
\[ \{\beta \in \node{1}{1}{\omega^2} :\ \{\alpha \in W_i:\ \mathfrak{c}(\{\alpha,\beta\})=1\} \text{ is finite}\}\]
is cofinite in $\node{1}{1}{\omega^2}$. Thus there exist ordinals $\omega \cdot (i+1) =\beta_0 < \beta_1 < \dots < \beta_{n-1}$ in $\node{1}{1}{\omega^2}$ such that $\{\alpha \in W_i:\ \mathfrak{c}(\{\alpha,\beta_j\})=1\}$ is finite for each $1 \leq j \leq n-1$. Hence the set
\[ H_i=\{\alpha \in W_i:\ \mathfrak{c}(\{\alpha,\beta_j\})=0\ \text{ for all } 1 \leq j \leq n-1\}\]
is cofinite in $W_i$. Since $\mathfrak{c}$ is $\omega$-homogeneous and there exists no blue homogeneous copy of $\omega$, the sets $W_i$ and $\{\beta_0,\beta_1,\dots,\beta_{n-1}\}$ are both red homogeneous. Also, $\hcc(1,1,0)=0$ implies that $c(\{\alpha,\omega \cdot (i+1)\})=0$ for all $\alpha \in W_i$. It follows that the set
\[ H_i \cup \{\omega \cdot (i+1), \beta_1, \beta_2, \dots, \beta_{n-1}\} \]
is a red homogeneous closed copy of $\omega+n$, which is a contradiction.
\end{proof}

\section{A lower bound}

Before we proceed to prove Theorem \ref{mainresult-lower}, we shall construct a special triangle-free graph whose vertices are subsets of ordinals and whose edges shall induce a coloring that witnesses $\gamma=\omega^2 \cdot n + \omega \cdot (R(n,3)-n)+(n-1)  \nrightarrow (\omega+n,3)^2$.

Let $n \geq 3$ be an integer and set $K=R(n,3)-n$. For each integer $1 \leq i \leq n$, set
\begin{align*}
A_1&=\node{1}{0}{\gamma}= \left\{\alpha + 1\ :\ \alpha < \omega^2\right\} \cup \{0\}\\ A_i&=\node{i}{0}{\gamma}=\left\{\omega^2 \cdot (i-1) + \alpha + 1\ :\ \alpha < \omega^2\right\} & \text{ if } i \neq 1\\
B_i&=\node{i}{1}{\gamma}=\left\{\omega^2 \cdot (i-1) + \omega \cdot (k+1)\ :\ k < \omega\right\}\\
L_i&=\node{i}{2}{\gamma}=\left\{\omega^2 \cdot i\right\}
\end{align*}
and for each integer $n < i \leq n+K$, set
\begin{align*}
C_i&=\node{i}{0}{\gamma}=\left\{\omega^2 \cdot n + \omega \cdot (i-n-1) + \alpha + 1\ :\ \alpha < \omega\right\}\\
L_i&=\node{i}{1}{\gamma}=\left\{\omega^2 \cdot n + \omega \cdot (i-n)\right\}
\end{align*}
In addition, set
$$ R=\{\omega^2 \cdot n + \omega \cdot K + m: 1 \leq m \leq n-2\}$$
It is easily seen that $$V=\{A_i,B_i,L_i: 1 \leq i \leq n\} \cup \{C_i,L_i: n < i \leq n+K\} \cup R$$ is a partition of the ordinal $\gamma=\omega^2 \cdot n + \omega \cdot (R(n,3)-n) + (n-1)$. Observe that, by the definition of Ramsey numbers, there exists a coloring $$\mathfrak{r}: \left[\{L_i: 1 \leq i < R(n,3)\}\right]^2 \rightarrow \{0,1\}$$ with no red homogeneous sets of size $n$ and no blue homogeneous set of size $3$. It is well-known that $R(n-1,3)<R(n,3)-1$ and so, there exists a red homogeneous set of size $n-1$ with respect to $\mathfrak{r}$. By relabeling if necessary, we may assume without loss of generality that the set $\{L_1,L_2,\dots,L_{n-1}\}$ is red homogeneous. Fix such a coloring $\mathfrak{r}$.

We shall next construct a graph $\GG_n$ on the vertex set $V$ using $\mathfrak{r}$. Define the edge sets $E_1$, $E_2$, $E_3$ and $E_4$ as follows.
\begin{align*}
E_1&=\{\{L_i,A_j\},\{A_i,B_j\}: 1 \leq i < j \leq n\}\cup\{\{A_i,B_i\},\{B_i,L_i\}: 1 \leq i \leq n\}\\
E_2&=\{\{C_i,L_i\}: n < i < n+K\}\\
E_3&=\left\{\{L_i,L_j\}: \mathfrak{r}\left(\{L_i,L_j\}\right)=1,\ 1 \leq i \neq j < R(n,3)\right\}\\
E_4&=\{\{X,A_i\}: X \in W,\ 1 \leq i \leq n\}\}
\end{align*}
where $W=\{C_{n+1},C_{n+2},\dots,C_{n+K},L_{n+K},R\}$. Consider the graph $$\GG_n=(V, E_1 \cup E_2 \cup E_3 \cup E_4)$$ We shall not attempt to perform the impossible task of drawing a diagram representation of $\GG_n$, simply because there is no known way to find such a map $\mathfrak{r}$ for an arbitrary $n$. However, in Figure \ref{figuremain}, we do provide a diagram representation of the subgraph $(V,E_1 \cup E_2)$ for arbitrary $n$ so that the reader may follow the arguments on this diagram if necessary. In Figure \ref{figurefor3}, a diagram representation of $\GG_3$ is given for some appropriate choice of $\mathfrak{r}$.

\newpage

\begin{figure}[h!]
  \includegraphics[scale=0.65]{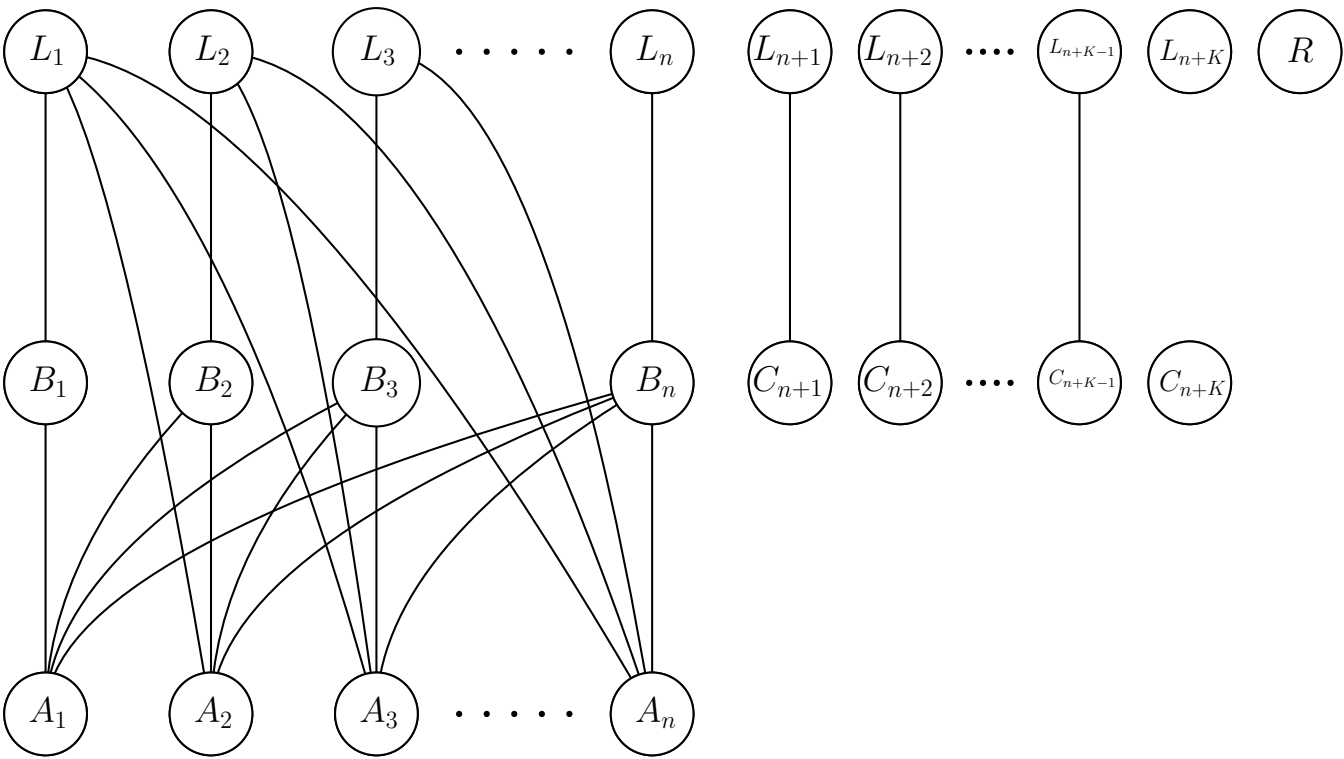}
  \caption{A diagram representation of $(V,E_1 \cup E_2)$.}
  \label{figuremain}
\end{figure}

\newpage

\begin{figure}[h!]
  \includegraphics[scale=0.70]{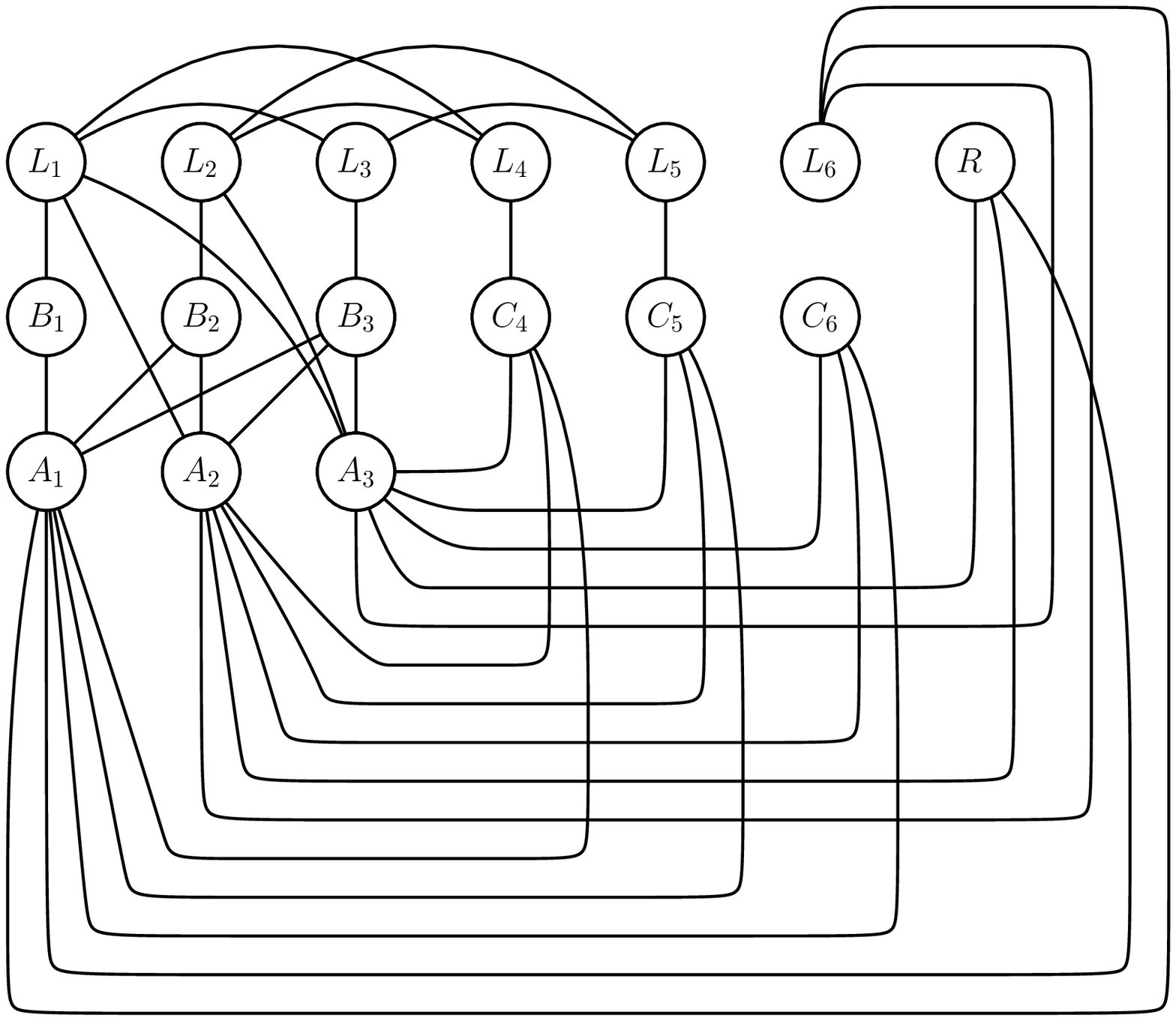}
  \caption{A diagram representation of $\mathbf{G}_3$ for a choice of $\mathfrak{r}$.}
  \label{figurefor3}
\end{figure}

\begin{lemma}\label{trianglefreelemma} The graph $\GG_n$ is triangle-free.
\end{lemma}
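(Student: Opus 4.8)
The plan is to verify directly that no three vertices of $\GG_n$ are pairwise adjacent, by exploiting the very rigid structure of the edge sets $E_1, E_2, E_3, E_4$. First I would record the ``type'' of each vertex — whether it is an $A_i$, a $B_i$, an $L_i$ with $i \le n$, a $C_i$, an $L_i$ with $n < i \le n+K$, or the single vertex $R$ — and tabulate, for each pair of types, which edge set (if any) can connect them. The key observations are: $B_i$ is adjacent only to $A_j$ with $j \le i$ and to $L_i$; each $C_i$ is adjacent only to $L_i$ (via $E_2$) and to the $A_j$'s (via $E_4$); the vertex $R$ and $L_{n+K}$ are adjacent only to the $A_j$'s; the $L_i$ with $i \le n$ are adjacent to $A_j$ ($j > i$), to $B_i$, and to other $L_j$'s via $\mathfrak{r}$; and an $A_i$ is adjacent to $L_j$ ($j<i$), to $B_j$ ($j \ge i$), and to vertices of $W$. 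Since $E_4$ attaches every vertex of $W$ to \emph{all} of $A_1, \dots, A_n$ but to nothing else, and $\{A_1,\dots,A_n\}$ is an independent set (no edge of $E_1$--$E_4$ joins two $A_i$'s), no triangle can use two vertices of $W$ or two $A_i$'s.

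The main case analysis then runs as follows. A triangle must contain at least one $A$-vertex or at least one $L$-vertex, since the only edges not incident to an $A$ or an $L$ are... in fact there are none: every edge in $E_1 \cup E_2 \cup E_3 \cup E_4$ is incident to some $A_i$ or some $L_i$. So I split into the case where the triangle contains an $A_i$ and the case where it contains no $A_i$ (hence contains only $L$-vertices, by the previous sentence, forcing all three vertices to be $L$'s). In the second case, a triangle among the $L_i$'s would be a triangle in the graph of $\mathfrak{r}$-edges $E_3$ together with the $E_1$-edges $\{B_i,L_i\}$ and $E_2$-edges $\{C_i,L_i\}$ — but those latter edges are not between two $L$'s, so the triangle lies entirely in $E_3$, i.e.\ is a blue homogeneous set of size $3$ for $\mathfrak{r}$, contradicting the choice of $\mathfrak{r}$. (One must note here that $L_i$ for $i > n$ is only joined to $L_j$ for $j < R(n,3)$, so the indices stay in the domain of $\mathfrak{r}$; and $\{L_1,\dots,L_{n-1}\}$ being red homogeneous does not create an $E_3$-edge.) In the first case, suppose the triangle is $\{A_i, x, y\}$. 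Since $A_i$ is adjacent only to $L_j$ ($j < i$), $B_j$ ($j \ge i$), and $W$-vertices, and $W$-vertices together with any two distinct $A$'s cannot form a triangle (a $W$-vertex has no neighbour outside $\{A_1,\dots,A_n\}$), we may assume $x = L_j$ with $j < i$ and then $y$ is a common neighbour of $A_i$ and $L_j$. But $L_j$ (with $j \le n$) is adjacent only to $A_k$ ($k > j$), $B_j$, and $L_k$'s; intersecting with the neighbours of $A_i$ leaves only $B_j$ — however $\{A_i, B_j\} \in E_1$ requires $j \ge i$, contradicting $j < i$. The remaining sub-case $x = B_j$, $y = B_{j'}$ is impossible because no two $B$-vertices are adjacent; and $x = B_j$, $y = L_k$ with $k < i$ reduces to the case just handled with the roles of $x,y$ swapped, again forcing a contradiction on the index inequalities.

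The step I expect to be the main obstacle is bookkeeping the index constraints in the first case cleanly — in particular making sure that the asymmetry in $E_1$ (namely $\{L_i,A_j\}$ and $\{A_i,B_j\}$ require $i < j$, while $\{A_i,B_i\}$ and $\{B_i,L_i\}$ are the only ``diagonal'' edges) is tracked correctly, so that a putative triangle $\{L_a, A_b, B_c\}$ would simultaneously demand $a < b$, $b \le c$, and $c = a$ (from $\{B_c, L_a\} \in E_1$ forcing $c = a$), which is contradictory. I would organize this as a short lemma-internal claim: ``within a single component of the $E_1$-graph, the vertices $L_i, A_1,\dots, A_n, B_1,\dots,B_n$ induce a graph with no triangle,'' proved by the inequality chase above, and then observe that $E_2, E_3, E_4$ only add edges that cannot complete a triangle with it — $E_2$ and $E_4$ because their edges go to degree-constrained vertices ($C_i$ only to $L_i$ and the $A$'s; $W$-vertices only to the $A$'s, which are mutually non-adjacent), and $E_3$ because it has already been dealt with in the $L$-only case. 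Everything else is routine verification against the adjacency table.
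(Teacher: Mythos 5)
Your overall strategy is sound and genuinely different in organization from the paper's: the paper adds the edge sets $E_1, E_2, E_3, E_4$ one at a time and checks that each addition preserves triangle-freeness, whereas you classify the three vertices of a putative triangle by type and read off contradictions from an adjacency table. Either decomposition works; yours is arguably more direct, but it puts more pressure on getting the adjacency table exactly right, and that is where the issues are.

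The concrete problem is the claim that ``a $W$-vertex has no neighbour outside $\{A_1,\dots,A_n\}$.'' This is false: for $n < i < n+K$, the vertex $C_i \in W$ is adjacent to $L_i$ via $E_2$, and your own opening paragraph even records this (``each $C_i$ is adjacent only to $L_i$ and to the $A_j$'s''), so the two statements contradict each other. The conclusion you want — that $A_i$ and a $W$-vertex $X$ have no common neighbour — is still true, but the correct reason is one level deeper: the sole non-$A$ neighbour of such a $C_m$ is $L_m$ with $m > n$, while the $L$-neighbours of any $A_i$ are $L_j$ with $j < i \le n$, so the index ranges are disjoint. (This is in fact exactly the point the paper's final paragraph is careful to make when it writes the neighbour set of $X$ as a subset of $\{L_j : n < j < R(n,3)\}$.) Relatedly, the reduction ``we may assume $x = L_j$'' and the claim ``no triangle can use two vertices of $W$'' both lean on this false statement; they should instead be argued from the fact that no two $W$-vertices are adjacent (check the four pairs $C$--$C$, $C$--$L_{n+K}$, $C$--$R$, $L_{n+K}$--$R$ directly) and from the disjointness of index ranges just described. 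Finally, the step ``contains no $A_i$, hence all three vertices are $L$'s'' does not follow from ``every edge meets an $A$ or an $L$'' alone; you also need that each of $B_i$, $C_i$, $R$, and $L_{n+K}$ has at most one neighbour outside $\{A_1,\dots,A_n\}$, so none of them can have two neighbours in an $A$-free triangle. These are all repairable, but as written the proposal does not go through.
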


\begin{proof} We shall first prove that $(V,E_1)$ is triangle-free. Assume towards a contradiction that there exists a triangle $T \subseteq V$ in the graph $(V,E_1)$. Since no two $B_i$'s are adjacent and no two $L_i$'s are adjacent in $(V,E_1)$, we must have that there exists $1 \leq j \leq n$ with $A_j \in T$. On the other hand, the set of neighbors of $A_j$ in $(V,E_1)$ is
\[ \{B_k: j \leq k \leq n\} \cup \{L_i: 1 \leq i < j\}\]
Moreover, for $1 \leq i,k \leq n$, we have that $L_i$ is adjacent to $B_k$ if and only if $i=k$. It follows that no two neighbors of $A_j$ are adjacent, which is a contradiction.

Having proven that $(V,E_1)$ is triangle-free, it is easily verified that $(V,E_1 \cup E_2)$ is triangle-free. This follows from the fact that the edges in $E_2$ are
\begin{itemize}
\item not incident with vertices that are incident to the edges in $E_1$, and
\item not incident with each other.
\end{itemize}

We shall next prove that $(V,E_1 \cup E_2 \cup E_3)$ is triangle-free. Suppose that there exists a triangle $T=\{X,Y,Z\} \subseteq V$ in the graph $(V,E_1 \cup E_2 \cup E_3)$. Since $(V,E_1 \cup E_2)$ is triangle-free, we must have that some edge in $E_3$ are incident to vertices in $T$, say, $X=L_i$ and $Y=L_j$ for some $1 \leq i < j < R(n,3)$. Since $\{L_1,\dots,L_{n-1}\}$ was arranged to be red homogeneous with respect to $\mathfrak{r}$, we must have $n \leq j$.

Then, by construction, we have that $Z=B_n$, $Z=C_j$ or $Z=L_k$ for some $1 \leq k \neq j < R(n,3)$. The first case leads to a contradiction as the only neighbor of $B_n$ among $L_m$'s in this graph is $L_n$. The second case leads to a contradiction as $C_j$'s only neighbor in this graph is $L_j$. The third case leads to a contradiction because there are no edges between $L_m$'s in the graph $(V,E_1 \cup E_2)$ and consequently, the third case happening would imply that all the edges of this triangle are from $E_3$, in which case we would have $\mathfrak{r}(\{L_i,L_j\})=\mathfrak{r}(\{L_j,L_k\})=\mathfrak{r}(\{L_i,L_k\})=1$, creating a blue homogeneous set of size $3$ with respect to $\mathfrak{r}$. Thus $(V,E_1 \cup E_2 \cup E_3)$ is triangle-free.

Finally, we shall prove that $\GG_n=(V,E_1 \cup E_2 \cup E_3 \cup E_4)$ is triangle-free. Suppose that there exists a triangle $T =\{X,Y,Z\} \subseteq V$ in $\GG_n$. Since $(V,E_1 \cup E_2 \cup E_3)$ is triangle-free, we must have that some edge in $E_4$ are incident to vertices in $T$, say, $X \in W$ and $Y=A_i$ for some $1 \leq i \leq n$. The set of neighbors of $X$ is a subset of $\{L_j: n<j<R(n,3)\}$ and the set of neighbors of $Y$ is a subset of $\{L_j: 1 \leq j <i \leq n\} \cup \{B_k: i \leq k \leq n\} \cup W$. However, these sets do not intersect and hence, we have a contradiction. Therefore, $\GG_n$ is triangle-free.
\end{proof}

We are now ready to prove the first main result.

\begin{proof}[Proof of Theorem \ref{mainresult-lower}] Let $n \geq 3$ be a positive integer and set $$\gamma=\omega^2 \cdot n + \omega \cdot (R(n,3)-n)+(n-1)$$ Consider the coloring $c: [\gamma]^2 \rightarrow \{0,1\}$ given by $c(\{\alpha,\beta\})=1$ if and only if the vertices containing $\alpha$ and $\beta$ in $\mathbf{G}_n$ are adjacent.

Since $\mathbf{G}_n$ is triangle-free by Lemma \ref{trianglefreelemma}, there does not exist a blue homogeneous copy of $3=\{0,1,2\}$. We shall next show that there exists no red homogeneous closed copy $X$ of $\omega+n$. Assume to the contrary that there exists such a set $X \subseteq \gamma$. Observe that, by definition, the vertices to which the elements of $X$ belong are not adjacent in $\mathbf{G}_n$. Let $h: \omega+n \rightarrow X$ be the order-homeomorphism and let us denote $h(\alpha)$ by $\dot{\boldsymbol{\alpha}}$ for all $\alpha \in \omega+n$. So we can write $X$ as
\[ \dot{\mathbf{0}}<\dot{\mathbf{1}}<\dots<\dot{\mathbf{\boldsymbol{\omega}}}<\dot{\mathbf{\boldsymbol{\omega}+1}}<\dots<\dot{\mathbf{\boldsymbol{\omega}+n-1}}\]
Since $\dot{\boldsymbol{\omega}}$ is a limit ordinal, we have that $\dot{\boldsymbol{\omega}} \in B_i$ for some $1 \leq i \leq n$, or, $\dot{\boldsymbol{\omega}} \in L_i$ for some $1 \leq i \leq R(n,3)$. We now analyze these cases.

Suppose that $\dot{\boldsymbol{\omega}} \in B_i$ for some $1 \leq i \leq n$. Then, for cofinitely many $k \in \omega$, we have $ \dot{\mathbf{k}} \in A_i$. On the other hand, $A_i$ and $B_i$ are adjacent in $\GG_n$, which leads to a contradiction.

Suppose that $\dot{\boldsymbol{\omega}} \in L_i$ for some $1 \leq i \leq n$. Then, for cofinitely many $k \in \omega$, we have $ \dot{\mathbf{k}} \in A_i$ or $ \dot{\mathbf{k}} \in B_i$. But, as $B_i$ and $L_i$ are adjacent, we obtain that $ \dot{\mathbf{k}} \in A_i$ for cofinitely many $k \in \omega$. Recall that
\begin{itemize}
\item $A_i$ is adjacent to each vertex in $\{B_j: i \leq j \leq n\} \cup W$, and
\item $L_i$ is adjacent to each vertex in $\{A_j: i<j \leq n\}$.
\end{itemize}
So the vertices in $\{A_j: i<j \leq n\} \cup \{B_j: i \leq j \leq n\} \cup W$ cannot contain the elements of $X$ greater than $\dot{\mathbf{\boldsymbol{\omega}}}$. It follows that
\[\{\dot{\mathbf{\boldsymbol{\omega}}},\dot{\mathbf{\boldsymbol{\omega}+1}},\dot{\mathbf{\boldsymbol{\omega}+2}},\dots,\dot{\mathbf{\boldsymbol{\omega}+n-1}}\} \subseteq \bigcup_{j=i}^{R(n,3)-1} L_j\]
Since each $L_j$ is a singleton and the set on the left-hand side is red homogeneous, we obtain that $\{L_j: i \leq j < R(n,3)\}$ has a subset of size $n$, no two vertices of which are adjacent. Recall that the edges between $L_j$'s in $\GG_n$ come from $E_3$. Consequently, there exists a red homogeneous set of size $n$ with respect to the coloring $\mathfrak{r}$, which is a contradiction.

Suppose that  $\dot{\boldsymbol{\omega}} \in L_i$ for some $n < i < R(n,3)$. Then, for cofinitely many $k \in \omega$, we have $ \dot{\mathbf{k}} \in C_i$. On the other hand, $C_i$ and $L_i$ are adjacent in $\GG_n$, which leads to a contradiction.

Finally, suppose that $\dot{\boldsymbol{\omega}} \in L_{R(n,3)}$. In this case, we must have
\[\{\dot{\mathbf{\boldsymbol{\omega}+1}},\dot{\mathbf{\boldsymbol{\omega}+2}},\dots,\dot{\mathbf{\boldsymbol{\omega}+n-1}}\} \subseteq R\]
This is a contradiction as the left-hand side has $n-1$ elements, whereas, the right-hand side has $n-2$ elements. We obtained contradictions in all cases. Therefore, there exists no such set $X$ and so $$\omega^2 \cdot n + \omega \cdot (R(n,3)-n)+(n-1)  \nrightarrow (\omega+n,3)^2$$
This completes the proof.
\end{proof}

\section{An upper bound}

In this section, we shall prove Theorem \ref{mainresult-upper}. In order to do this, we will need several technical lemmas. For the following lemmas, fix integers $n,K \geq 2$, the ordinal $$\gamma=\omega^2 \cdot n + \omega \cdot K +1$$ and a canonical $\omega$-homogeneous coloring $\mathfrak{c}: [\gamma]^2 \rightarrow \{0,1\}$ with no red homogeneous closed copy of $\omega+n$ and no blue homogeneous copy of $3=\{0,1,2\}$. Recall that, since $\cc$ is canonical, there exist functions $\hcc$ and $\tcc$ as in Section 2.

The proof of Theorem \ref{mainresult-upper} will be a convoluted case-by-case proof that uses these lemmas which essentially show that certain values of $\tcc$ and $\hcc$ are automatically determined by the non-existence of a red homogeneous $\omega+n$ and a blue homogeneous $3$. We shall see that, intuitively speaking, some of the patterns in the graph $\GG_n$ were unavoidable and had to appear if we are to avoid certain homogeneous sets. To keep track of what is going on, the reader may want to ``visualize" the statements and arguments of these lemmas. In order to do this, the reader may pretend that we are constructing a directed graph on the partition $$\gamma = \bigsqcup_{1 \leq i \leq n,\ 0 \leq j \leq 2} \node{i}{j}{\gamma}\ \ \ \sqcup \bigsqcup_{n+1 \leq i \leq n+K,\ 0 \leq j \leq 1} \node{i}{j}{\gamma}$$ of $\gamma$ by putting an edge
\begin{itemize}
\item from $\node{i}{j}{\gamma}$ to $\node{k}{\ell}{\gamma}$ if $\tcc(i,j;k,\ell)=1$.
\item from $\node{i}{2}{\gamma}$ to $\node{i}{\ell}{\gamma}$ if $\hcc(i,2,\ell)=1$.
\item from $\node{i}{1}{\gamma}$ to $\node{i}{0}{\gamma}$.
\end{itemize}
We would like to note that $\tcc(i,j;k,\ell)=1$ does not imply that every pair of ordinals coming from the corresponding vertices have the color $1$ under $\cc$. It only implies that ``most" pairs have the color $1$. We would also like to remark that the edges from $\node{i}{1}{\gamma}$ to $\node{i}{0}{\gamma}$ are automatically added regardless of the value of $\hcc$, due to Lemma \ref{omegasquaredlevels}, which says that either $\hcc(i,1,0)=1$ or there are ``many" pairs coming from the corresponding vertices for which $\cc$ has value $1$. Finally, we wish to emphasize that whether we are using a directed graph or an undirected graph has absolutely no role in the proofs. Indeed, our arguments do not refer to any graphs at all. We are simply suggesting this ``supplementary" approach if the reader wishes to do more than line-by-line proof checking. With this graph interpretation in mind, the following lemma prevents the existence of certain triangles in this graph. The first four items of this lemma essentially appeared in \cite[Lemma 4.3]{Mermelstein19}. Nevertheless, we include the proofs for self-containment.
\begin{lemma}\label{notrianglelemma} Let $1 \leq k \leq n$ and $0 \leq \ell \leq 2$.
\begin{itemize}
\item[(a)] For every $1 \leq i \neq k \leq n$ and $0 \leq j \leq 2$, $$\tcc(i,j;k,0)=0 \text{ or } \tcc(i,j;k,1)=0.$$
\item[(a$'$)] For every $n+1 \leq i \leq n+K$ and $0 \leq j \leq 1$, $$\tcc(i,j;k,0)=0 \text{ or } \tcc(i,j;k,1)=0.$$
\item[(b)] For every $1 \leq i \neq k  \leq n$, $$\tcc(i,0;k,\ell)=0 \text{ or } \tcc(i,1;k,\ell)=0.$$
\item[(c)] For every $1 \leq i \leq n$, $\hcc(i,2,0)=0$ or $\hcc(i,2,1)=0$.
\item[(d)] For every $n+1 \leq i \neq m \leq n+K$, if $\tcc(m,0;k,\ell)=\tcc(i,0;k,\ell)=1$, then $\tcc(m,0;i,0)=0$.
\end{itemize}
\end{lemma}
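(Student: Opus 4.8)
The plan is to argue by contradiction, assuming all three values $\tcc(m,0;k,\ell)$, $\tcc(i,0;k,\ell)$, and $\tcc(m,0;i,0)$ equal $1$, and then build a blue homogeneous set of size $3$. The three ``$=1$'' hypotheses should be read through the equivalence recorded in Section~2: for any $\alpha$ with $CNF_\gamma(\alpha)=m$, $CB(\alpha)=0$ there is $r$ with $\cc(\{\alpha,\beta\})=1$ for all $\beta\in F(\omega^{m_1}+\dots+\omega^{m_i})^r_0$, and similarly for the other two. The first step is therefore to pick concrete witnesses: fix $\alpha\in\node{m}{0}{\gamma}$ with the associated $r$ coming from $\tcc(m,0;i,0)=1$ and from $\tcc(m,0;k,\ell)=1$ (take the larger of the two $r$'s so both inclusions hold simultaneously). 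Then $F(\omega^{m_1}+\dots+\omega^{m_i})^r_0$ is a cofinal subset of $\node{i}{0}{\gamma}$, all of whose elements $\beta$ satisfy $\cc(\{\alpha,\beta\})=1$; since $\node{i}{0}{\gamma}$ is an infinite set, this $F$-set is nonempty, so choose one such $\beta$.

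Next I would feed this chosen $\beta$ into the hypothesis $\tcc(i,0;k,\ell)=1$: since $CNF_\gamma(\beta)=i$ and $CB(\beta)=0$, there is an $r'$ with $\cc(\{\beta,\delta\})=1$ for every $\delta\in F(\omega^{m_1}+\dots+\omega^{m_k})^{r'}_\ell$. Simultaneously, from $\tcc(m,0;k,\ell)=1$ with the witness $\alpha$ we have an $r''$ with $\cc(\{\alpha,\delta\})=1$ for every $\delta\in F(\omega^{m_1}+\dots+\omega^{m_k})^{r''}_\ell$. The key combinatorial point is that these two $F$-sets (for the same target node $\node{k}{\ell}{\gamma}$ but possibly different superscripts) have nonempty intersection — indeed $F(\theta)^r_\ell\supseteq F(\theta)^{r+1}_\ell$ is decreasing in $r$, so $F(\omega^{m_1}+\dots+\omega^{m_k})^{\max(r',r'')}_\ell$ lies in both and is still nonempty (it is cofinal in $T^{=\ell}(\omega^{m_1}+\dots+\omega^{m_k})$, which is infinite when $\ell<m_k$ and a singleton when $\ell=m_k$, but in either case nonempty). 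Pick $\delta$ in this intersection.

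Now $\{\alpha,\beta,\delta\}$ satisfies $\cc(\{\alpha,\beta\})=1$, $\cc(\{\beta,\delta\})=1$, and $\cc(\{\alpha,\delta\})=1$, i.e.\ it is a blue homogeneous set of size $3$ — but these three ordinals are genuinely distinct, since $\alpha\in\node{m}{0}{\gamma}$, $\beta\in\node{i}{0}{\gamma}$, $\delta\in\node{k}{\ell}{\gamma}$, and the indices $m$, $i$ are distinct (with $m,i>n$) while $k\leq n$, so the three nodes are pairwise disjoint. This contradicts the standing assumption that $\cc$ has no blue homogeneous copy of $3$, completing the proof. The only genuine subtlety, and the step I expect to need the most care, is the bookkeeping around the $F$-sets: one must make sure that when the same node $\node{k}{\ell}{\gamma}$ is hit from two different sources, the two guaranteed subsets overlap, which is exactly why one takes a common (large) value of the superscript $r$; this is the same monotonicity fact about $F(\theta)^r_\ell$ in $r$ that is used implicitly throughout these canonical-coloring arguments, and it is worth stating explicitly before invoking it.
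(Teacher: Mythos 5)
Your argument only addresses part (d) of the lemma; parts (a), (a$'$), (b), and (c) are not touched, and they are not routine variants of (d). For (d) itself your reasoning is sound and essentially matches the paper's: fix $\alpha\in\node{m}{0}{\gamma}$, use $\tcc(m,0;i,0)=1$ to obtain a $\beta\in\node{i}{0}{\gamma}$ with $\cc(\{\alpha,\beta\})=1$, then use the two remaining $\tcc$-hypotheses together with the monotonicity of $r\mapsto F(\theta)^r_\ell$ to find a common $\delta$ that is blue to both $\alpha$ and $\beta$. (One small wrinkle you glossed over: the $F$-set to which $\beta$ belongs comes from $\tcc(m,0;i,0)$ and the $F$-set to which the $\beta$ of $\tcc(i,0;k,\ell)$ applies is all of $\node{i}{0}{\gamma}$, so there is no intersection issue for $\beta$; the only intersection you really need is for $\delta$, which you handled correctly.)

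The genuine gap is that (a), (a$'$), (b), and (c) require a different idea that your scheme cannot supply. In (d) all three edges of the hoped-for blue triangle are controlled by $\tcc$, because the three vertices $\alpha,\beta,\delta$ lie in three different blocks. In (a), (a$'$), (b), and (c) exactly one edge of the triangle joins two ordinals inside the \emph{same} $\omega^2$-block $k$ (or $i$) at levels $0$ and $1$, and no hypothesis of the form $\tcc=1$ or $\hcc=1$ is available to force that edge blue. The paper closes this edge by invoking Lemma~\ref{omegasquaredlevels}: since $\cc$ restricted to the copy of $\omega^2$ is a normal $\omega$-homogeneous coloring with no red closed $\omega+n$ and no blue $\omega$, either $\hcc(k,1,0)=1$ (normality then colors every $<^*$-related pair blue, and one picks such a pair inside the relevant $F$-sets), or else there are cofinally many level-$1$ nodes that are blue to cofinally many level-$0$ nodes in a fixed column $W_r$, which again lets one land a blue pair inside $F(\omega^2\cdot k)^r_1\times F(\omega^2\cdot k)^r_0$. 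Without this step you simply have no way to produce the third blue edge, so your proof as written does not establish (a), (a$'$), (b), or (c). You should add this ingredient explicitly, and in particular state and use the alternative provided by Lemma~\ref{omegasquaredlevels} in each of those four cases.
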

\begin{proof} To prove (a) and (a$'$), assume to the contrary that $\tcc(i,j;k,0)=1$ and $\tcc(i,j;k,1)=1$ for some such $i$ and $j$. By definition of $\tcc$, we know that for all $\theta \in \node{i}{j}{\gamma}$ there exist $r_1, r_2 \in \mathbb{N}^+$ such that for all $\beta_1 \in F(\omega^2 \cdot k)^{r_1}_{0}$ and $\beta_2 \in F(\omega^2 \cdot k)^{r_2}_{1}$ we have $\cc(\{\theta,\beta_1\})=\cc(\{\theta,\beta_2\})=1$. Let $r=\max\{r_1,r_2\}$ and fix $\theta \in \node{i}{j}{\gamma}$. Applying Lemma \ref{omegasquaredlevels} to the closed copy $\{\beta \in \gamma: \beta <^* \omega^2 \cdot k\}$ of $\omega^2$, we may split into two cases.

Suppose that $\hcc(k,1,0)=1$. Choose some $\beta \in F(\omega^2 \cdot k)^{r}_{1}$ and $\beta' \in F(\omega^2 \cdot k)^{r}_{0}$ with $\beta' <^* \beta$. Then $\cc(\{\beta,\beta'\})=1$ and hence, the set $\{\theta,\beta,\beta'\}$ is a blue homogeneous copy of $3$, which is a contradiction.

Suppose that, for every $i < \omega$,
\[ \{\beta \triangleleft^* \omega^2 \cdot k :\ \{\alpha \in W_i:\ \mathfrak{c}(\{\alpha,\beta\})=1\} \text{ is cofinal in } W_i\}\]
is cofinal in $\{\beta \in \gamma: \beta \triangleleft^* \omega^2 \cdot k\}$, where $W_i=\{\omega^2 \cdot (k-1) + \omega \cdot i + m : 0 < m < \omega\}$. In particular, this claim holds for $i=r$. We can then find some $\beta \in F(\omega^2 \cdot k)^{r}_{1}$ and $\beta' \in F(\omega^2 \cdot k)^{r}_{0}$ with $\cc(\{\beta,\beta'\})=1$. In this case, the set $\{\theta,\beta,\beta'\}$ is a blue homogeneous copy of $3$, which is a contradiction. This completes the proof of (a) and (a$'$).

To prove (b), assume to the contrary that $\tcc(i,0;k,\ell)=1 \text{ and } \tcc(i,1;k,\ell)=1$ for some $1 \leq i \neq k  \leq n$ and $0 \leq \ell \leq 2$. Applying Lemma \ref{omegasquaredlevels} to the closed copy $\{\beta \in \gamma: \beta <^* \omega^2 \cdot i\}$ of $\omega^2$, we see that, in either case, there exist some $\beta \in \node{i}{1}{\gamma}$ and $\beta' \in \node{i}{0}{\gamma}$ with $\cc(\{\beta,\beta'\})=1$, .

Then, by the definition of $\tcc$, there exists $r_1 \in \mathbb{N}^+$ such that $\cc(\{\beta,\theta\})=1$ whenever $\theta \in F(\omega^2 \cdot k)^{r_1}_{\ell}$; and there exists $r_2 \in \mathbb{N}^+$ such that $\cc(\{\beta',\theta\})=1$ whenever $\theta \in F(\omega^2 \cdot k)^{r_2}_{\ell}$. Choose $\theta \in F(\omega^2 \cdot k)^{r}_{\ell}$ where $r=\max\{r_1,r_2\}$. Then the set $\{\theta,\beta,\beta'\}$ is a blue homogeneous copy of $3$, which is a contradiction. This completes the proof of (b).

To prove (c), assume to the contrary that $\hcc(i,2,0)=1$ and $\hcc(i,2,1)=1$ for some $1 \leq i \leq n$. As before, applying Lemma \ref{omegasquaredlevels} to the closed copy $\{\beta \in \gamma: \beta <^* \omega^2 \cdot i\}$ of $\omega^2$ gives us $\beta \in \node{i}{1}{\gamma}$ and $\beta' \in \node{i}{0}{\gamma}$ such that $\cc(\{\beta,\beta'\})=1$. Then the set $\{\omega^2 \cdot i,\beta,\beta'\}$ is a blue homogeneous copy of $3$, which is a contradiction. This completes the proof of (c)

To prove (d), assume to the contrary that $$\tcc(m,0;i,0)=\tcc(m,0;k,\ell)=\tcc(i,0;k,\ell)=1$$ for some such $i$ and $m$. Let $\alpha \in \node{m}{0}{\gamma}$. Since $\tcc(m,0;i,0)=1$, as before, we can find $\alpha \in \node{m}{0}{\gamma}$ and $\beta \in \node{i}{0}{\gamma}$ such that $\cc(\{\alpha,\beta\})=1$. The other equations now imply that there exist $r_1,r_2 \in \mathbb{N}^+$ such that for all $\beta_1 \in F(\omega^2 \cdot k)^{r_1}_{\ell}$ and $\beta_2 \in F(\omega^2 \cdot k)^{r_2}_{\ell}$ we have $\cc(\{\alpha,\beta_1\})=\cc(\{\beta,\beta_2\})=1$. Choose $\theta \in F(\omega^2 \cdot k)^{r}_{\ell}$ where $r=\max\{r_1,r_2\}$. Then the set $\{\alpha,\beta,\theta\}$ is a blue homogeneous copy of $3$, which is a contradiction. This finishes the proof.\end{proof}

\begin{lemma}\label{omegaplusnlemma}Let $1 \leq k \leq n$ and $0 \leq \ell \leq 1$. \begin{itemize}
\item[(a)]  For all $k < i \leq n$ and $0 \leq j \leq 1$,\\ if $\hcc(k,2,\ell)=0$, then $\tcc(i,j;k,\ell)=1$ or $\tcc(i,j;k,2)=1$.
\item[(b)]  For all $n+1 \leq i \leq n+K$ and $j=0$,\\ if $\hcc(k,2,\ell)=0$, then $\tcc(i,j;k,\ell)=1$ or $\tcc(i,j;k,2)=1$.
\end{itemize}
\end{lemma}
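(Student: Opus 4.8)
The plan is to argue by contradiction. Assume $\hcc(k,2,\ell)=0$ but $\tcc(i,j;k,\ell)=0$ and $\tcc(i,j;k,2)=0$, and build a red homogeneous closed copy of $\omega+n$ inside $\gamma$, contradicting the standing assumption on $\cc$. The copy will have the shape $S\cup\{\omega^2\cdot k\}\cup\{\alpha_1,\dots,\alpha_{n-1}\}$, where $S$ is a red homogeneous $\omega$-sequence inside the $k$-th component converging to $\omega^2\cdot k$, and $\alpha_1<\cdots<\alpha_{n-1}$ are $n-1$ ordinals taken from $\node{i}{j}{\gamma}$. Since $i>k$ each $\alpha_p$ exceeds $\omega^2\cdot k$, so $\omega^2\cdot k=\sup S$ is the unique limit point of the set and the $\alpha_p$'s lie above it as isolated points; hence such a set is order-homeomorphic to $\omega+n$, and it suffices to arrange that $S\cup\{\omega^2\cdot k,\alpha_1,\dots,\alpha_{n-1}\}$ is homogeneous of colour $0$.

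First I would produce the $\alpha_p$'s. In each case covered by the lemma there is a node of the $<^*$-forest all of whose (infinitely many) children lie in $\node{i}{j}{\gamma}$: for $j=1$ and $i\le n$ take $\omega^2\cdot i$; for $j=0$ and $i\le n$ take any point of $\node{i}{1}{\gamma}$; and for $j=0$ and $i>n$ take the unique point of $\node{i}{1}{\gamma}$, whose set of children is exactly $\node{i}{0}{\gamma}$. By $\omega$-homogeneity the children of such a node are pairwise of one fixed colour, and this colour cannot be $1$ since otherwise any three of them would form a blue homogeneous copy of $3$; so we may pick $n-1$ pairwise-red elements $\alpha_1<\cdots<\alpha_{n-1}$ among them. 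By the characterization of $\tcc$ recalled in Section 2, $\tcc(i,j;k,\ell)=0$ gives, for each $p$, an $r_p$ with $\cc(\{\alpha_p,\beta\})=0$ for all $\beta\in F(\omega^2\cdot k)^{r_p}_{\ell}$, and $\tcc(i,j;k,2)=0$ gives, since $F(\omega^2\cdot k)^{r}_{2}=\{\omega^2\cdot k\}$, that $\cc(\{\alpha_p,\omega^2\cdot k\})=0$. Put $r=\max_p r_p$.

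It remains to build $S$, and this is the only real obstacle. When $\ell=1$ it is immediate: $\node{k}{1}{\gamma}$ is exactly the set of children of $\omega^2\cdot k$, hence red homogeneous, and its cofinal subset $S:=F(\omega^2\cdot k)^r_1$ is a red homogeneous $\omega$-sequence with $\sup S=\omega^2\cdot k$; moreover each $s\in S$ satisfies $s<^*\omega^2\cdot k$ with $CB(s)=1$, so $\cc(\{s,\omega^2\cdot k\})=\hcc(k,2,1)=0$ by normality and the hypothesis, and $\cc(\{\alpha_p,s\})=0$ because $s\in F(\omega^2\cdot k)^r_{1}$. When $\ell=0$ the set $\node{k}{0}{\gamma}$ is the union of the children-sets of the distinct points of $\node{k}{1}{\gamma}$, so it need not be homogeneous and a red homogeneous cofinal subsequence must be extracted by hand. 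For $s\in\node{k}{0}{\gamma}$ write $c_s=c_s(\omega^2\cdot k,0)$ for its dominant colour towards level $0$ of the $k$-th component, as provided by Item b of canonicity; if for some bound $M$ every $s=\omega^2\cdot(k-1)+\omega\cdot m+t$ with $m,t>M$ had $c_s=1$, then choosing three such with rapidly increasing parameters would, by Item b, make them pairwise blue, a blue homogeneous copy of $3$. Hence for every $M$ there is such an $s$ with $m,t>M$ and $c_s=0$, and a routine recursion yields $s_0<s_1<\cdots$ in $F(\omega^2\cdot k)^r_0$ with $c_{s_q}=0$ for all $q$, with the $m$-parameter of $s_q$ tending to infinity (so $\sup_q s_q=\omega^2\cdot k$), and with each $s_{q+1}$ chosen far enough out that $\cc(\{s_p,s_q\})=c_{s_p}=0$ whenever $p<q$. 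This $S=\{s_q:q<\omega\}$ is red homogeneous and cofinal in $\omega^2\cdot k$, is red to $\omega^2\cdot k$ because $\hcc(k,2,0)=0$, and is red to each $\alpha_p$ because $S\subseteq F(\omega^2\cdot k)^r_0$. In either case $S\cup\{\omega^2\cdot k,\alpha_1,\dots,\alpha_{n-1}\}$ is homogeneous of colour $0$ and order-homeomorphic to $\omega+n$, the contradiction that establishes (a) and (b) at once.
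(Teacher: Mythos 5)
Your proof is correct and follows essentially the same route as the paper's: argue by contradiction, extract a red homogeneous set $\{\alpha_1,\dots,\alpha_{n-1}\}\subseteq\node{i}{j}{\gamma}$ from $\omega$-homogeneity and the absence of a blue $3$, take $r$ to be the maximum of the witnesses to $\tcc(i,j;k,\ell)=\tcc(i,j;k,2)=0$, and close off with a red homogeneous copy of $\omega$ converging to $\omega^2\cdot k$. Your handling of the case $\ell=0$ is in fact more careful than the paper's, which asserts it is ``straightforward to check'' that $F(\omega^2\cdot k)^{r}_{\ell}\cup\{\omega^2\cdot k,\alpha_1,\dots,\alpha_{n-1}\}$ is a red homogeneous closed copy of $\omega+n$ --- taken literally this fails for $\ell=0$, where $F(\omega^2\cdot k)^{r}_{0}$ has order type $\omega^2$ and need not be homogeneous across distinct parents --- so your extraction of a red homogeneous cofinal $\omega$-subsequence via Item b of canonicity supplies exactly the detail the paper leaves implicit.
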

\begin{proof} We shall prove both parts at once. Assume to the contrary that $$\hcc(k,2,\ell)=\tcc(i,j;k,\ell)=\tcc(i,j;k,2)=0$$ for some such $i$ and $j$. Then, by the definition of $\tcc$, for every $\alpha \in \node{i}{j}{\gamma}$ there exists $r_{\alpha} \in \mathbb{N}$ such that $$\cc(\{\alpha,\beta\})=\cc(\{\alpha,\omega^2 \cdot k\})=0$$ for all $\beta \in F(\omega^2 \cdot k)^{r_{\alpha}}_{\ell}$. Since $\cc$ is $\omega$-homogeneous and there is no homogeneous copy of $3$, we can find a red homogeneous set $\{\alpha_1, \alpha_2, \dots, \alpha_{n-1}\} \subseteq \node{i}{j}{\gamma}$ with $\alpha_1 < \dots < \alpha_{n-1}$. Set
\[ r = \max\{r_{\alpha_1},r_{\alpha_2},\dots,r_{\alpha_{n-1}}\}\]
It is now straightforward to check that the set
\[ F(\omega^2 \cdot k)^{r}_{\ell} \cup \{\omega^2 \cdot k, \alpha_1,\alpha_2,\dots,\alpha_{n-1}\}\]
is a red homogeneous closed copy of $\omega+n$, which leads to a contradiction.
\end{proof}

\begin{lemma}\label{alconnections} Let $1 \leq k < n$.
\begin{itemize}
\item[(a)] For any $k< i \leq n$ and $0 \leq j \leq 1$, we have that $$\tcc(i,j;k,0)=1 \text{ or } \tcc(i,j;k,1)=1 \text{ or }\tcc(i,j;k,2)=1.$$
\item[(b)] For any $n+1 \leq i \leq n+K$ and $j=0$, we have that $$\tcc(i,j;k,0)=1 \text{ or } \tcc(i,j;k,1)=1 \text{ or }\tcc(i,j;k,2)=1.$$
\end{itemize}
\end{lemma}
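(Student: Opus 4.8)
The plan is to derive this lemma as a quick corollary of Lemma \ref{omegaplusnlemma} together with Lemma \ref{notrianglelemma}(c), by doing a case analysis on the value of $\hcc(k,2,0)$. Parts (a) and (b) will be proved in exactly the same way, the only difference being whether we invoke part (a) or part (b) of Lemma \ref{omegaplusnlemma}; so I would present the argument once, fixing $k$, $i$, $j$ in the appropriate range.

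First I would consider the case $\hcc(k,2,0)=0$. Applying Lemma \ref{omegaplusnlemma} with $\ell=0$ (part (a) if $k<i\leq n$ and $0 \leq j \leq 1$, part (b) if $n+1\leq i\leq n+K$ and $j=0$), we conclude that $\tcc(i,j;k,0)=1$ or $\tcc(i,j;k,2)=1$, and in either subcase the desired disjunction holds. Next I would consider the case $\hcc(k,2,0)=1$. By Lemma \ref{notrianglelemma}(c) applied to the index $k$ (here we use $1\leq k\leq n$, which is guaranteed since $k<n$), we must have $\hcc(k,2,1)=0$. Now applying Lemma \ref{omegaplusnlemma} with $\ell=1$ gives $\tcc(i,j;k,1)=1$ or $\tcc(i,j;k,2)=1$, so again the desired disjunction holds.

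Since $\hcc(k,2,0)$ is either $0$ or $1$, these two cases are exhaustive, and in each case one of $\tcc(i,j;k,0)=1$, $\tcc(i,j;k,1)=1$, $\tcc(i,j;k,2)=1$ holds, completing the proof.

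There is no real obstacle here beyond bookkeeping: the content of the statement is entirely carried by the two preceding lemmas, and the only thing to notice is that $\hcc(k,2,0)$ and $\hcc(k,2,1)$ cannot both be $1$ (Lemma \ref{notrianglelemma}(c)), so whichever of the two values $\ell\in\{0,1\}$ has $\hcc(k,2,\ell)=0$ feeds into Lemma \ref{omegaplusnlemma} to produce the required edge into level $\ell$ or level $2$ of the $k$-th block. One should just be careful to check that the index ranges for $i$ and $j$ in the hypothesis of Lemma \ref{alconnections} match exactly those in the relevant part of Lemma \ref{omegaplusnlemma}, which they do.
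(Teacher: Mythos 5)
Your proof is correct and is essentially the same argument as the paper's: the paper proves the contrapositive (assume all three values of $\tcc$ are $0$, derive a contradiction via Lemma \ref{notrianglelemma}(c) and Lemma \ref{omegaplusnlemma}), whereas you give the direct version by casing on which of $\hcc(k,2,0)$, $\hcc(k,2,1)$ is $0$; the lemmas invoked and their role are identical.
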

\begin{proof} As before, we shall prove both parts at once. Assume towards a contradiction that, for some, for some such $i$ and $j$, $$\tcc(i,j;k,0)=\tcc(i,j;k,1)=\tcc(i,j;k,2)=0$$ By Lemma \ref{notrianglelemma}.c, we have $\hcc(k,2,0)=0$ or $\hcc(k,2,1)=0$. In both cases, we get a contradiction by Lemma \ref{omegaplusnlemma}.
\end{proof}

\begin{lemma}\label{lbconnection} For any $1 \leq i < n$, either $\hcc(i,2,0)=1$ or $\hcc(i,2,1)=1$.
\end{lemma}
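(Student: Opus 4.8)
The statement is equivalent to ruling out the single possibility $\hcc(i,2,0)=\hcc(i,2,1)=0$ (the case where both equal $1$ is already excluded by Lemma \ref{notrianglelemma}(c), and any other configuration has one of them equal to $1$). So the plan is to assume $\hcc(i,2,0)=\hcc(i,2,1)=0$ and derive a contradiction purely formally, by chaining the cross‑block lemmas already available. The crucial point is that, since $i<n$, the block indexed by $i+1$ exists, and one can transport the (hypothetically all‑red) behaviour of the node $\node{i}{2}{\gamma}=\{\omega^2\cdot i\}$ into a forbidden configuration of $\tcc$‑values between blocks $i+1$ and $i$.

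First I would apply Lemma \ref{omegaplusnlemma}(a) twice with $k=i$ (so $\omega^2\cdot k=\omega^2\cdot i$), taking the lemma's ``$i$'' to be $i+1$ — legitimate because $i<i+1\le n$ — and letting $j$ range over $\{0,1\}$: from $\hcc(i,2,0)=0$ one gets $\tcc(i+1,j;i,0)=1$ or $\tcc(i+1,j;i,2)=1$, and from $\hcc(i,2,1)=0$ one gets $\tcc(i+1,j;i,1)=1$ or $\tcc(i+1,j;i,2)=1$. Next, for each fixed $j\in\{0,1\}$, I would observe that $\tcc(i+1,j;i,2)$ must equal $1$: if it were $0$, the two disjunctions would force $\tcc(i+1,j;i,0)=\tcc(i+1,j;i,1)=1$, contradicting Lemma \ref{notrianglelemma}(a) for the pair $(i+1,i)$ and this $j$. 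Hence $\tcc(i+1,0;i,2)=1$ and $\tcc(i+1,1;i,2)=1$, which contradicts Lemma \ref{notrianglelemma}(b) applied to the pair $(i+1,i)$ with $\ell=2$ (note that $\ell=2$ is permitted there).

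I do not expect any genuine obstacle: this is a short deduction from three previously established items, with no analytic content. The only care needed is index bookkeeping — checking that $i+1$ is a legal block index (this is exactly where the hypothesis $i<n$ rather than $i\le n$ is used), and that all parameter ranges are respected: $1\le k\le n$, $0\le\ell\le 1$, $k<i\le n$, $0\le j\le 1$ in Lemma \ref{omegaplusnlemma}(a), and $1\le i\neq k\le n$ with $0\le\ell\le 2$ in Lemma \ref{notrianglelemma}(a),(b).
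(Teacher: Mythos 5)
Your proof is correct and uses exactly the same three ingredients as the paper (Lemma \ref{omegaplusnlemma}(a) together with parts (a) and (b) of Lemma \ref{notrianglelemma}, applied to the blocks $i$ and $i+1$); you merely chain them in the reverse order, deducing $\tcc(i+1,0;i,2)=\tcc(i+1,1;i,2)=1$ and contradicting Lemma \ref{notrianglelemma}(b), whereas the paper starts from Lemma \ref{notrianglelemma}(b) and contradicts Lemma \ref{omegaplusnlemma}(a). This is essentially the same argument, and your index bookkeeping (in particular the role of $i<n$) is correct.
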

\begin{proof} It is clear by Lemma \ref{notrianglelemma}.c that we do not have $\hcc(i,2,0)=1$ and $\hcc(i,2,1)=1$. So assume towards a contradiction that $\hcc(i,2,0)=0$ and $\hcc(i,2,1)=0$ for some $1 \leq i \leq n$.

It follows from Lemma \ref{notrianglelemma}.b, $\tcc(i+1,j;i,2)=0$ for some $0 \leq j \leq 1$. Now, by Lemma \ref{notrianglelemma}.a, $\tcc(i+1,j;i,\ell)=0$ for some $0 \leq \ell \leq 1$. But then, we have that $\hcc(i,2,\ell)=\tcc(i+1,j;i,\ell)=\tcc(i+1,j;i,2)=0$, which contradicts Lemma \ref{omegaplusnlemma}.a.
\end{proof}

Before we state the next lemma, we will introduce some notation in light of Lemma \ref{lbconnection}. For each $1 \leq i < n$, let us denote
\begin{itemize}
\item the (unique) pair $(i,j)$ for which $\hcc(i,2,j)=0$ by $A_i$ and
\item the (unique) pair $(i,j)$ for which $\hcc(i,2,j)=1$ by $B_i$
\end{itemize}
We shall also denote
\begin{itemize}
\item the pair $(i,2)$ by $L_i$ for each $1 \leq i \leq n$, and
\item the pair $(i,1)$ by $L_i$ for each $n+1 \leq i \leq n+K$.
\end{itemize}
Using the ideas in the proof of Lemma \ref{omegaplusnlemma} and this newly introduced notation, we next prove a technical lemma that will be used multiple times in the main proof.

\begin{lemma}\label{omegaplusnlemma2} Let $1 \leq k \leq n$ and $0 \leq \ell \leq 1$ and let $k < i_1 < i_2 < \dots < i_{n-1} \leq n+K$. If we have $$\tcc(L_{i_{t'}};L_{i_{t}})=\tcc(L_{i_t};k,\ell)=\tcc(L_{i_t};L_k)=0$$ for all $1 \leq t < t' \leq n-1$, then $\hcc(k,2,\ell)=1$.
\end{lemma}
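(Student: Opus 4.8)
The plan is to argue by contradiction, mimicking the structure of the proof of Lemma~\ref{omegaplusnlemma}. So suppose the hypothesis $\tcc(L_{i_{t'}};L_{i_t})=\tcc(L_{i_t};k,\ell)=\tcc(L_{i_t};L_k)=0$ holds for all $1\le t<t'\le n-1$, but $\hcc(k,2,\ell)=0$. Writing $\theta_t=\omega^2\cdot k$ when $i_t\le n$ and interpreting $L_{i_t}$ correctly (the node $\node{i_t}{2}{\gamma}$ if $i_t\le n$, and $\node{i_t}{1}{\gamma}$ if $i_t>n$), I would pick one representative $\alpha_t$ from each node $L_{i_t}$ with $\alpha_1<\alpha_2<\dots<\alpha_{n-1}$; note $\{\alpha_1,\dots,\alpha_{n-1}\}$ is red homogeneous since there is no blue homogeneous $3$.

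Next I would unpack what the three families of vanishing $\tcc$ values buy us. First, $\tcc(L_{i_t};k,\ell)=0$ means, via the canonical-coloring equivalence, that for each $t$ there is $r_t^{(1)}\in\mathbb{N}$ with $\cc(\{\alpha_t,\beta\})=0$ for every $\beta\in F(\omega^2\cdot k)^{r_t^{(1)}}_{\ell}$. Second, $\tcc(L_{i_t};L_k)=0$ means $\cc(\{\alpha_t,\omega^2\cdot k\})=0$ (here I use that $L_k$ is the node $\node{k}{2}{\gamma}=\{\omega^2\cdot k\}$, a singleton, so this is a genuine equality, not an ``almost all'' statement). Third, $\tcc(L_{i_{t'}};L_{i_t})=0$ together with the assumption that $\cc$ is $\omega$-homogeneous and admits no blue homogeneous $3$ guarantees that the $\alpha_t$'s are pairwise red; actually, as noted, red homogeneity of $\{\alpha_1,\dots,\alpha_{n-1}\}$ already follows from the absence of a blue $3$ alone, so this third hypothesis is slightly redundant for the contradiction but harmless. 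Finally the hypothesis $\hcc(k,2,\ell)=0$ gives that $\cc(\{\omega^2\cdot k,\beta\})=0$ for all $\beta\in F(\omega^2\cdot k)^{r}_\ell$ for a suitable $r$ — in fact for all $\beta\triangleleft^*$-below nodes of the right $CB$-rank; the relevant point is that $\omega^2\cdot k$ is red against a large subset of the $\ell$-th level.

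Now I would set $r=\max\{r_1^{(1)},\dots,r_{n-1}^{(1)}\}$ (enlarging if necessary so that $\hcc(k,2,\ell)=0$ also certifies redness of $\omega^2\cdot k$ against $F(\omega^2\cdot k)^r_\ell$) and consider the set
\[
F(\omega^2\cdot k)^{r}_{\ell}\ \cup\ \{\omega^2\cdot k,\ \alpha_1,\alpha_2,\dots,\alpha_{n-1}\}.
\]
The set $F(\omega^2\cdot k)^r_\ell$ is a homogeneous copy of $\omega$ (this is where $\omega$-homogeneity of $\cc$ enters, combined with the standard fact that these $F$-sets are order-homeomorphic to $\omega$), and it is red because there is no blue $3$. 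Adjoining $\omega^2\cdot k$ on top gives a closed copy of $\omega+1$ which is still red by $\hcc(k,2,\ell)=0$. Adjoining the $n-1$ points $\alpha_1<\dots<\alpha_{n-1}$ above that — which all lie strictly above $\omega^2\cdot k$ since $i_t>k$ — yields a closed copy of $\omega+n$; it is red because: the $\alpha_t$'s are mutually red, each $\alpha_t$ is red against $\omega^2\cdot k$ by $\tcc(L_{i_t};L_k)=0$, and each $\alpha_t$ is red against every element of $F(\omega^2\cdot k)^r_\ell$ by $\tcc(L_{i_t};k,\ell)=0$ and the choice of $r\ge r_t^{(1)}$. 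This contradicts the standing assumption that $\cc$ has no red homogeneous closed copy of $\omega+n$, so $\hcc(k,2,\ell)=1$.

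The main obstacle I anticipate is bookkeeping around the case $i_t>n$: there the node $L_{i_t}=\node{i_t}{1}{\gamma}$ has $CB$-rank $1$ rather than $2$, and one must be careful that the relation $i_t>k$ still places all of $L_{i_t}$ above $\omega^2\cdot k$ and above all of $F(\omega^2\cdot k)^r_\ell$ in the ordinal $\gamma$ — which it does, since everything relevant to $\omega^2\cdot k$ lives below $\omega^2\cdot(k+1)\le\omega^2\cdot n$. A second minor subtlety is making sure each $L_{i_t}$ really contains an ordinal we can use as $\alpha_t$ with the required $<^*$/ordinal position and extracting a single $r$ that simultaneously works for all the $\tcc$-equalities as well as the $\hcc$-equality; this is exactly the ``$r=\max$'' maneuver from Lemma~\ref{omegaplusnlemma} and poses no real difficulty. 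Everything else is a direct transcription of that earlier argument.
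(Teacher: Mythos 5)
Your overall skeleton — choose the representative $\alpha_t$ of each singleton node $L_{i_t}$, take $r$ to be a maximum of finitely many thresholds, and exhibit $F(\omega^2\cdot k)^r_\ell \cup \{\omega^2\cdot k,\alpha_1,\dots,\alpha_{n-1}\}$ as a red homogeneous closed copy of $\omega+n$ — is exactly the paper's route. However, you make a claim that is simply false and that, if taken at face value, invalidates the proof: you assert that ``$\{\alpha_1,\dots,\alpha_{n-1}\}$ is red homogeneous since there is no blue homogeneous $3$'' and then that the hypothesis $\tcc(L_{i_{t'}};L_{i_t})=0$ is ``slightly redundant.'' Absence of a blue homogeneous copy of $3$ forbids only three pairwise-blue points; it says nothing about a single pair, and a priori some $\cc(\{\alpha_t,\alpha_{t'}\})$ could be $1$ without producing a blue triangle. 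The hypothesis $\tcc(L_{i_{t'}};L_{i_t})=0$ is not redundant — it is precisely the ingredient that gives red homogeneity, and you should use it the same way you correctly use $\tcc(L_{i_t};L_k)=0$: each $L_{i_t}$, $L_{i_{t'}}$, $L_k$ is a singleton node at the top level of its component, so $F(\cdot)^r_{m_k}$ at the top level is a singleton, and hence $\tcc(L_{i_{t'}};L_{i_t})=0$ is literally the equality $\cc(\{\alpha_{t'},\alpha_t\})=0$, just as $\tcc(L_{i_t};L_k)=0$ is literally $\cc(\{\alpha_t,\omega^2\cdot k\})=0$. This is what the paper does: it cites (i) and (iii) together to get that $\{\omega^2\cdot k,\alpha_1,\dots,\alpha_{n-1}\}$ is red homogeneous. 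Your final assembly happens to list ``the $\alpha_t$'s are mutually red'' as one of the verified facts, so the contradiction is reached, but the surrounding justification is wrong and the claimed redundancy would lead a reader to believe a false strengthening of the lemma. A secondary inaccuracy: you assert that $F(\omega^2\cdot k)^r_\ell$ is order-homeomorphic to $\omega$ and red homogeneous by $\omega$-homogeneity. For $\ell=1$ this is right. For $\ell=0$, the set $F(\omega^2\cdot k)^r_0$ has order type $\omega^2$ (one more level of the tree contributes another factor of $\omega$), it is not closed, and $\omega$-homogeneity controls only siblings, not points with distinct $<^*$-parents. The paper is also terse here; the fix in the $\ell=0$ case is to select a sequence in $F(\omega^2\cdot k)^r_0$ converging to $\omega^2\cdot k$ and extract an infinite red subsequence via the infinite Ramsey theorem combined with the absence of a blue $3$.
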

\begin{proof} Assume to the contrary that
\begin{itemize}
\item[(i)] $\tcc(L_{i_{t'}};L_{i_{t}})=0$ for all $1 \leq t < t' \leq n-1$,
\item[(ii)] $\tcc(L_{i_t};k,\ell)=0$ for all $1 \leq t \leq n-1$,
\item[(iii)] $\tcc(L_{i_t};L_k)=0$ for all $1 \leq t \leq n-1$, and
\item[(iv)] $\hcc(k,2,\ell)=0$.
\end{itemize}
Let $\alpha_t$ be the unique element of $\mathbf{[\boldsymbol{\gamma};L_{i_t}]}$ for each $1 \leq t \leq n-1$. Using (i) and (iii), one can easily show that $$\{\omega^2 \cdot k,\alpha_1,\alpha_2,\dots,\alpha_{n-1}\}$$ is red homogeneous. It follows from (ii) and the definition of $\tcc$ that, for every $1 \leq t \leq n-1$, there exists $r_{t} \in \mathbb{N}$ such that $\cc(\{\alpha_t,\beta\})=0$ for all $\beta \in F(\omega^2 \cdot k)^{r_t}_{\ell}$. Set $r = \max\{r_{1},r_{2},\dots,r_{n-1}\}$. It is now straightforward to check using the previous observations and (iv) that the set
\[ F(\omega^2 \cdot k)^{r}_{\ell} \cup \{\omega^2 \cdot k,\alpha_1,\alpha_2,\dots,\alpha_{n-1}\}\]
is a red homogeneous closed copy of $\omega+n$, which leads to a contradiction.
\end{proof}

\begin{lemma}\label{notobconnection}Let $1 \leq k < n$. Then
\begin{itemize}
\item[a.] For any $k < i \leq n$, we have $\tcc(i,0;B_k)=\tcc(i,1;B_k)=0$.
\item[b.] For any $n+1 \leq i \leq n+K$, we have $\tcc(i,0;B_k)=0$.
\end{itemize}
\end{lemma}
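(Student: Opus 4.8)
\textbf{Proof plan for Lemma \ref{notobconnection}.}

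The plan is to argue by contradiction in the spirit of Lemma \ref{omegaplusnlemma}, using the fact that $B_k=(k,j)$ is precisely the pair with $\hcc(k,2,j)=1$, while $A_k=(k,1-j)$ has $\hcc(k,2,1-j)=0$. Write $B_k=(k,\ell)$ for the appropriate $\ell \in \{0,1\}$, so that $\hcc(k,2,\ell)=1$, meaning that $\cc(\{\omega^2\cdot k,\beta\})=1$ for all $\beta \in \node{k}{\ell}{\gamma}$ (recall $\hcc(k,2,\ell)=1$ forces this color on every $<^*$-pair between level $2$ and level $\ell$ inside component $k$, since $\cc$ is canonical, hence normal, and $\omega^2 \cdot k$ is the unique element of $\node{k}{2}{\gamma}$). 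Now suppose toward a contradiction that $\tcc(i,j';k,\ell)=1$ for some $i>k$ (with $j'\in\{0,1\}$ in case a, or $n+1\le i\le n+K$ with $j'=0$ in case b). By the characterization of $\tcc$, for every $\alpha \in \node{i}{j'}{\gamma}$ there is $r_\alpha \in \mathbb{N}$ with $\cc(\{\alpha,\beta\})=1$ for every $\beta \in F(\omega^2\cdot k)^{r_\alpha}_{\ell}$.

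The key step is then to assemble a blue triangle. Since $\cc$ is $\omega$-homogeneous with no blue homogeneous copy of $3$, the set $\node{i}{j'}{\gamma}$ is red homogeneous (if two of its members were joined by color $1$ they would, together with any third point suitably chosen, give a blue $3$; more directly, $\omega$-homogeneity gives a single color $c_\alpha$ on all children of each node, and that color must be $0$ lest we get a blue homogeneous $\omega \supseteq 3$). Pick any $\alpha \in \node{i}{j'}{\gamma}$, let $r=r_\alpha$, and pick any $\beta \in F(\omega^2\cdot k)^{r}_{\ell} \subseteq \node{k}{\ell}{\gamma} = T^{=\ell}(\omega^2 \cdot k)$. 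Then $\cc(\{\alpha,\beta\})=1$ from $\tcc(i,j';k,\ell)=1$, and $\cc(\{\omega^2\cdot k,\beta\})=1$ from $\hcc(k,2,\ell)=1$. It remains only to control $\cc(\{\alpha,\omega^2\cdot k\})$: since $\node{i}{j'}{\gamma}$ lies in component $i>k$ and $\omega^2 \cdot k$ sits in component $k$, the ordinals $\omega^2 \cdot k$ and $\alpha$ are $<^*$-incomparable, so this pair need not be blue — which means the naive triangle $\{\alpha,\beta,\omega^2\cdot k\}$ is not automatically monochromatic.

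Hence the main obstacle is exactly this: $\omega^2 \cdot k <^* \beta$ and $\alpha <^* \beta$, but $\alpha$ and $\omega^2 \cdot k$ are not $<^*$-related, so we must instead build the blue copy of $3$ entirely ``below'' $\beta$. The fix is to work with two points of $\node{k}{\ell}{\gamma}$ rather than with $\omega^2\cdot k$: choose $\beta' <^* \beta$ with $\beta,\beta' \in F(\omega^2\cdot k)^{r}_{\ell}$ both in level $\ell$, but this still requires a color-$1$ edge between $\beta$ and $\beta'$, which is where Lemma \ref{omegasquaredlevels} applied to the closed copy $\{\delta : \delta <^* \omega^2\cdot k\} \cong \omega^2$ of $\omega^2$ enters — exactly as in the proof of Lemma \ref{notrianglelemma}(a). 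That lemma yields, in the $\hcc(k,1,0)=1$ subcase, a pair $\beta \in \node{k}{\ell}{\gamma}$, $\beta'\in\node{k}{0}{\gamma}$ (with appropriate bookkeeping when $\ell=1$) with $\cc(\{\beta,\beta'\})=1$ and both in the relevant $F$-sets; in the other subcase it yields the same inside $W_r$. Then $\alpha$ together with this pair forms the blue $3$, provided $\ell$ is handled so that $\tcc(i,j';k,\ell)=1$ gives color $1$ to both $\{\alpha,\beta\}$ and $\{\alpha,\beta'\}$ — which one gets by also invoking $\tcc(i,j';k,0)=?$; the cleanest route is to observe that if $\ell=0$ this is immediate, and if $\ell=1$ one additionally uses Lemma \ref{notrianglelemma}(a) (giving $\tcc(i,j';k,0)=0$) together with Lemma \ref{alconnections} or a direct re-run of the argument. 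The final write-up will thread these subcases; the conceptual content is that a color-$1$ edge into $B_k$ from above, combined with the color-$1$ edge $\hcc(k,2,\ell)=1$ internal to component $k$, is incompatible with triangle-freeness once one descends into the $\omega^2$-copy via Lemma \ref{omegasquaredlevels}.
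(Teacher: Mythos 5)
Your proposal has a genuine gap, and the difficulty you identify mid-way is not in fact an obstacle that needs circumventing — it is the hinge of the paper's argument.

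You correctly observe that $\alpha$ and $\omega^2\cdot k$ are $<^*$-incomparable, so normality says nothing about $\cc(\{\alpha,\omega^2\cdot k\})$. But you then conclude the triangle $\{\alpha,\beta,\omega^2\cdot k\}$ is unusable and pivot to trying to build a blue $3$ entirely inside component $k$. That pivot is where the argument breaks. The pair $\{\alpha,\omega^2\cdot k\}$ \emph{is} controlled — not by $\hcc$, but by $\tcc(i,j;L_k)=\tcc(i,j;k,2)$, since $F(\omega^2\cdot k)^r_2=\{\omega^2\cdot k\}$. The paper does not assert that this edge is blue; it argues: \emph{if} $\tcc(i,j;L_k)=1$ then $\{\alpha,\beta,\omega^2\cdot k\}$ is a blue triangle (using $\hcc(k,2,\ell)=1$ for the edge $\{\beta,\omega^2\cdot k\}$ and $\tcc(i,j;B_k)=1$ for $\{\alpha,\beta\}$), which is impossible — hence $\tcc(i,j;L_k)=0$. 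Combined with $\tcc(i,j;A_k)=0$ from Lemma \ref{notrianglelemma}(a), this contradicts Lemma \ref{omegaplusnlemma} (applied with $\hcc(k,2,\ell')=0$ where $A_k=(k,\ell')$), finishing the proof. You never invoke Lemma \ref{omegaplusnlemma} at the end, and Lemma \ref{alconnections}, which you gesture at, does not give the contradiction since you are assuming $\tcc(i,j;B_k)=1$.

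Your alternative route — find $\beta,\beta'$ in component $k$ with $\cc(\{\beta,\beta'\})=1$ via Lemma \ref{omegasquaredlevels} and close the triangle with $\alpha$ — cannot be repaired. Since $\beta$ and $\beta'$ must be $<^*$-comparable (they lie at levels $1$ and $0$), you would need blue edges from $\alpha$ to a point at level $1$ of component $k$ \emph{and} to a point at level $0$; generically this requires $\tcc(i,j;k,0)=\tcc(i,j;k,1)=1$, which Lemma \ref{notrianglelemma}(a) forbids. This is exactly the obstruction you notice under the heading ``bookkeeping when $\ell=1$,'' but it is not bookkeeping — it is a hard block, and no amount of threading subcases will route around it with this triangle.
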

\begin{proof} We will prove both parts at once. Assume towards a contradiction that $\tcc(i,j;B_k)=1$ for some such $i$ and $j$. Then it follows from Lemma \ref{notrianglelemma}.a that $\tcc(i,j;A_k)=0$.

By the assumption that $\tcc(i,j;B_k)=1$, for every $\alpha \in \node{i}{j}{\gamma}$, there exists $r \in \mathbb{N}^+$ such that $\cc(\{\alpha,\beta\})=1$ for every $\beta \in F(\omega^2 \cdot k)^r_{\ell}$ where $B_k=(k,\ell)$. Choose such $\alpha$ and $\beta$. Since $\hcc(k,2,\ell)=1$, if it were the case that $\tcc(i,j;L_k)=1$, then the set $\{\alpha,\beta,\omega^2 \cdot k\}$ would be a blue homogeneous copy of $3$. Thus $\tcc(i,j;L_k)=0$.  But this contradicts Lemma \ref{omegaplusnlemma}, as we have $\tcc(i,j;L_k)=\tcc(i,j;A_k)=0$.
\end{proof}

\begin{corollary}\label{maincorollary-2} For every $1 \leq k < n < i \leq n+K$, we have that $\tcc(i,0;A_k)=1$ or $\tcc(i,0;L_k)=1$.
\end{corollary}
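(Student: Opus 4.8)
The plan is to combine Lemma \ref{alconnections}.b with Lemma \ref{notobconnection}.b. Fix $1 \leq k < n < i \leq n+K$. By Lemma \ref{alconnections}.b (applied with $j=0$, which is legitimate since $k < n$), at least one of the three equalities $\tcc(i,0;k,0)=1$, $\tcc(i,0;k,1)=1$, $\tcc(i,0;k,2)=1$ holds. Recall that $L_k$ denotes the pair $(k,2)$, so $\tcc(i,0;k,2)=1$ is exactly $\tcc(i,0;L_k)=1$. Hence we may assume $\tcc(i,0;k,0)=1$ or $\tcc(i,0;k,1)=1$; in either case the relevant pair is one of $A_k$ or $B_k$, and I want to rule out the $B_k$ alternative.

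The key point is that, by definition, $B_k$ is the unique pair $(k,\ell)$ with $\hcc(k,2,\ell)=1$, and $A_k$ is the unique pair $(k,\ell)$ with $\hcc(k,2,\ell)=0$; together with Lemma \ref{lbconnection} these two exhaust the possibilities $\ell \in \{0,1\}$. So whichever of $\tcc(i,0;k,0)=1$ or $\tcc(i,0;k,1)=1$ holds, the corresponding pair $(k,\ell)$ equals either $A_k$ or $B_k$. By Lemma \ref{notobconnection}.b (applicable since $1 \leq k < n$ and $n+1 \leq i \leq n+K$), we have $\tcc(i,0;B_k)=0$. Therefore the surviving pair cannot be $B_k$, so it must be $A_k$, giving $\tcc(i,0;A_k)=1$. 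Either way — whether the original alternative was the $L_k$ branch or the $A_k$ branch — we conclude $\tcc(i,0;A_k)=1$ or $\tcc(i,0;L_k)=1$, as desired.

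There is essentially no obstacle here: the statement is a short bookkeeping consequence of the earlier lemmas, amounting to observing that the three options handed to us by Lemma \ref{alconnections}.b are indexed by $\{A_k, B_k, L_k\}$ and that the $B_k$ option is forbidden by Lemma \ref{notobconnection}.b. The only point requiring a small amount of care is confirming that $j=0$ and the range $n+1 \leq i \leq n+K$ put us squarely in the hypotheses of both invoked lemmas, which they do.
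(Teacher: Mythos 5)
Your proof is correct and follows exactly the route the paper intends: the paper's proof of Corollary \ref{maincorollary-2} is simply the one-liner ``This easily follows from Lemma \ref{alconnections}.b and Lemma \ref{notobconnection}.b,'' and your argument spells out precisely that deduction, correctly identifying that the three alternatives from Lemma \ref{alconnections}.b correspond to $A_k$, $B_k$, $L_k$ and that the $B_k$ option is excluded by Lemma \ref{notobconnection}.b.
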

\begin{proof} This easily follows from Lemma \ref{alconnections}.b and Lemma \ref{notobconnection}.b.\end{proof}

\begin{corollary}\label{maincorollary} For any $1 \leq k < i \leq n$, exactly one of the following holds.
\begin{itemize}
\item[i.] $\tcc(i,1;L_k)=\tcc(i,0;A_k)=1$ and $\tcc(i,1;A_k)=\tcc(i,0;L_k)=0$.
\item[ii.] $\tcc(i,1;L_k)=\tcc(i,0;A_k)=0$ and $\tcc(i,1;A_k)=\tcc(i,0;L_k)=1$.
\end{itemize}
\end{corollary}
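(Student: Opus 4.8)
The plan is to reduce the statement to a small Boolean constraint-satisfaction problem on four $0/1$ values and then resolve it by a direct case split. I would first recall the notation introduced after Lemma~\ref{lbconnection}: for $1 \leq k < n$, Lemma~\ref{lbconnection} together with Lemma~\ref{notrianglelemma}.c shows that exactly one of $\hcc(k,2,0)$, $\hcc(k,2,1)$ equals $1$, so that $\{A_k,B_k\}=\{(k,0),(k,1)\}$ while $L_k=(k,2)$. Thus for $k < i \leq n$ the relevant quantities are $a_0:=\tcc(i,0;A_k)$, $a_1:=\tcc(i,1;A_k)$, $l_0:=\tcc(i,0;L_k)$, $l_1:=\tcc(i,1;L_k)$; outcome (i) reads $a_0=l_1=1$ and $a_1=l_0=0$, while outcome (ii) reads $a_0=l_1=0$ and $a_1=l_0=1$. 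These two outcomes disagree in every coordinate, so ``exactly one'' will follow as soon as I verify that at least one of them holds.

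Next I would assemble four constraints on $(a_0,a_1,l_0,l_1)$. Applying Lemma~\ref{notrianglelemma}.b with $\ell$ equal to the second coordinate of $A_k$ gives $a_0=0$ or $a_1=0$; applying it with $\ell=2$ gives $l_0=0$ or $l_1=0$. In the other direction, Lemma~\ref{alconnections}.a says that for each $j\in\{0,1\}$ at least one of $\tcc(i,j;k,0)$, $\tcc(i,j;k,1)$, $\tcc(i,j;k,2)$ equals $1$; since $\{(k,0),(k,1)\}=\{A_k,B_k\}$ and $\tcc(i,0;B_k)=\tcc(i,1;B_k)=0$ by Lemma~\ref{notobconnection}.a, this forces $a_0=1$ or $l_0=1$ (taking $j=0$) and $a_1=1$ or $l_1=1$ (taking $j=1$).

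Finally I would run the case split on whether $a_0=1$. If $a_0=1$, then $a_1=0$ by the first constraint, hence $l_1=1$ by ``$a_1=1$ or $l_1=1$'', hence $l_0=0$ by the $\ell=2$ constraint; this is outcome (i). If $a_0=0$, then ``$a_0=1$ or $l_0=1$'' forces $l_0=1$, hence $l_1=0$ by the $\ell=2$ constraint, hence $a_1=1$ by ``$a_1=1$ or $l_1=1$''; this is outcome (ii). (The leftover possibility $a_0=a_1=0$ cannot occur, as it would force $l_0=l_1=1$, contradicting the $\ell=2$ constraint.) I do not anticipate a genuine obstacle here; the bulk of the work is simply marshalling the right earlier lemmas. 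The only points requiring care are keeping straight which of $(k,0)$, $(k,1)$ is $A_k$ and which is $B_k$ when invoking Lemma~\ref{notrianglelemma}.b, and checking that the hypothesis $1 \leq k < i \leq n$ meets the range requirements (notably $k<n$ and $k\neq i$) of Lemmas~\ref{notrianglelemma}, \ref{alconnections}, and \ref{notobconnection}.
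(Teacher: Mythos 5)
Your proof is correct and uses exactly the same ingredients as the paper's (Lemma~\ref{notrianglelemma}.b, Lemma~\ref{alconnections}.a, and Lemma~\ref{notobconnection}.a); the only cosmetic difference is that you branch on $\tcc(i,0;A_k)$ while the paper branches on $\tcc(i,1;L_k)$, and you make the ``exactly one'' clause explicit by noting that (i) and (ii) disagree in every coordinate.
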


\begin{proof} Let $1 \leq k < i \leq n$. We split into two cases.

Suppose that $\tcc(i,1;L_k)=1$. Then, by Lemma \ref{notrianglelemma}.b, we get $\tcc(i,0;L_k)=0$. Now, applying Lemma \ref{alconnections}.a and Lemma \ref{notobconnection}.a, we see that $\tcc(i,0;A_k)=1$. Since $\tcc(i,0;A_k)=1$, it follows from Lemma \ref{notrianglelemma}.b that $\tcc(i,1;A_k)=0$. Therefore, we are in Case (i).

Suppose that $\tcc(i,1;L_k)=0$. Applying Lemma \ref{alconnections}.a and Lemma \ref{notobconnection}.a, we obtain that $\tcc(i,1;A_k)=1$. Then $\tcc(i,0;A_k)=0$ by Lemma \ref{notrianglelemma}.b. Since $\tcc(i,0;A_k)=0$, another application of Lemma \ref{alconnections}.a, and Lemma \ref{notobconnection}.a, gives us that $\tcc(i,0;L_k)=1$. Therefore, we are in Case (ii).
That these cases are mutually exclusive is clear.\end{proof}

With our graph interpretation in mind, this last corollary basically explains why, in the proof of Theorem \ref{mainresult-lower}, the edges in $E_1$ of $\GG_n$ were chosen as they are. Up to certain choices, the edge structure of $\GG_n$ was already mostly determined by the non-existence of a red homogeneous $\omega+n$ and a blue homogeneous $3$. We shall need two more lemmas before we prove the second main theorem and conclude this section.

\begin{lemma}\label{mainlemma} Let $1 \leq i <n$ and $0 \leq j \leq 2$. If we have that $\tcc(m,0;i,j)=1$ for all $n+1 \leq m \leq n+K$, then $\tcc(L_m;i,j)=0$ for all $n+1 \leq m < n+K$.
\end{lemma}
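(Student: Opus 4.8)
The plan is to argue by contradiction in the spirit of Lemma~\ref{omegaplusnlemma2}: suppose that $\tcc(m,0;i,j)=1$ for every $n+1\leq m\leq n+K$, yet $\tcc(L_m;i,j)=1$ for some fixed $n+1\leq m<n+K$. First I would unpack the hypotheses via the definition of $\tcc$: for each $n+1\leq m'\leq n+K$, picking a representative $\alpha_{m'}\in\mathbf{[\boldsymbol{\gamma};L_{m'}]}$ (the unique element of that node when $m'\neq n+K$, or any element of $\node{m'}{0}{\gamma}$), there is $r_{m'}\in\mathbb{N}$ with $\cc(\{\alpha_{m'},\beta\})=1$ for all $\beta\in F(\omega^2\cdot i)^{r_{m'}}_{j}$; and similarly, from $\tcc(L_m;i,j)=1$, a color-$1$ relationship between the node $L_m$ and the set $F(\omega^2\cdot i)^r_j$. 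The first real step is therefore to notice that $\tcc(m,0;i,j)=\tcc(m',0;i,j)=1$ together with Lemma~\ref{notrianglelemma}.d forces $\tcc(m',0;m,0)=0$ for all $m'\neq m$ in the relevant range — so the ``$L_m$ column'' receives color $0$ from every other $C$-node; this prevents $\{\alpha_{m'}:n+1\leq m'\leq n+K,\ m'\neq m\}$ from producing a blue triangle with $\alpha_m$.

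Next I would try to build a red homogeneous closed copy of $\omega+n$ to get the contradiction. The finite part should be $\{\alpha_m\}\cup\{\alpha_{m'}: m'\in S\}$ where $S\subseteq\{n+1,\dots,n+K\}\setminus\{m\}$ is chosen so that this set of $n-1$ ordinals (or so) is red homogeneous — here $\omega$-homogeneity plus the absence of a blue triangle among the $C$-nodes, combined with the color-$0$ edges from Lemma~\ref{notrianglelemma}.d just established, should give such an $S$ of size $n-2$, yielding $n-1$ ordinals total. The tail of the copy of $\omega+n$ should come from $F(\omega^2\cdot i)^r_j$ for $r$ large enough (the max of the finitely many $r_{m'}$'s plus whatever $r$ the $\tcc(L_m;i,j)=1$ hypothesis supplies): by the $\tcc(m',0;i,j)=1$ clauses this tail is all color $1$ from each of the chosen $\alpha_{m'}$, which is the wrong color — so actually I must be more careful: the tail must be color $0$ from the finite part. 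This means the correct move is to use $\tcc(L_m;i,j)=1$ against the others: since the tail $F(\omega^2\cdot i)^r_j$ is color $1$ from $\alpha_m$ but we want color $0$, the copy of $\omega+n$ must instead have $\alpha_m$ \emph{omitted} from its finite part and $F(\omega^2\cdot i)^r_j$ paired only with the $\alpha_{m'}$, $m'\neq m$ — but those give color $1$ too. So the real contradiction is a blue triangle, not a red $\omega+n$: take $\alpha_m$, some $\alpha_{m'}$ with $\tcc(m',0;m,0)=1$, and an element $\beta\in F(\omega^2\cdot i)^r_j$ with $r\geq\max\{r_m,r_{m'}\}$; then $\cc(\{\alpha_m,\beta\})=\cc(\{\alpha_{m'},\beta\})=1$ and we need $\cc(\{\alpha_m,\alpha_{m'}\})=1$.

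So the heart of the argument is: $\tcc(L_m;i,j)=1$ should be leveraged to force $\tcc(m',0;m,0)=1$ for some $m'$, contradicting the Lemma~\ref{notrianglelemma}.d consequence derived above. The mechanism I expect: among $\alpha_{n+1},\dots,\alpha_{n+K}$ (excluding $\alpha_m$) the absence of a blue triangle and $\omega$-homogeneity force these $K-1\geq n-2$ ordinals to contain a large red-homogeneous subset, \emph{unless} many color-$1$ edges appear among them; but the color-$1$ edges among $C$-nodes are constrained by Lemma~\ref{notrianglelemma}.d relative to the common $\tcc(\cdot,0;i,j)=1$. Concretely, with $K\geq n-2$ available indices, pick a red-homogeneous $\{\alpha_{m_1},\dots,\alpha_{m_{n-1}}\}$ among all $C$-nodes \emph{including} $\alpha_m$ and such that $m=m_s$ for some $s$; combine with $\{\omega^2\cdot i,\alpha_{m_1},\dots,\alpha_{m_{n-1}}\}$ being red homogeneous (using $\tcc(L_m;i,j)=1$ now as $\cc(\{\alpha_m,\omega^2\cdot i\})$ — wait, $L_m$ refers to the node $(m,1)$ and $\alpha_m\in\node{m}{1}{\gamma}$, and $\tcc(L_m;i,j)$ is the color between that node and $F(\omega^2\cdot i)_j$, not $\omega^2\cdot i$ itself), then attach the tail $F(\omega^2\cdot i)^r_j$ which is color $0$ from each $\alpha_{m_t}$ with $m_t\neq m$ by $\tcc(m_t,0;i,j)$... no. I will present the clean version: \textbf{the plan is} to show $\hcc(i,2,j)=0$ is forced (if it were $1$ we'd need to track that separately), assume $\tcc(L_m;i,j)=1$, and produce either a blue $3$ or a red $\omega+n$ by the following case split — if $\tcc(m',0;m,0)=1$ for some $m'\neq m$ with $m'$ in range, the blue triangle $\{\alpha_m,\alpha_{m'},\beta\}$ with $\beta\in F(\omega^2\cdot i)^r_j$, $r$ large, finishes it; otherwise all $\tcc(m',0;m,0)=0$, the set $\{\alpha_{m'}:m'\ne m\}\cup\{\alpha_m\}$ restricted to an $\omega$-homogeneous-red subset of size $n-1$ sits red-homogeneously above $F(\omega^2\cdot i)^r_j$ (which is color $0$ from $\alpha_m$? no — from $\alpha_m$ it is color $1$), so we must instead omit $\alpha_m$, use $n-1$ other $\alpha_{m'}$'s — available since $K-1\geq n-2$ is off by one, so in fact the hypothesis $K\geq n$ or a parity check on $K$ is where the size count bites.

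\medskip
\textbf{Expected main obstacle.} The delicate point is the cardinality bookkeeping: one needs $n-1$ red-homogeneous ordinals among the $C$- and $L$-nodes of ranks $n{+}1,\dots,n{+}K$ together with a color-$0$ tail from $F(\omega^2\cdot i)^r_j$, and the hypothesis ``$\tcc(m,0;i,j)=1$ for \emph{all} $m$'' makes every $\alpha_m$ send color $1$ (not $0$) down to that tail — so the finite part of the would-be red $\omega+n$ cannot include \emph{any} $\alpha_m$, which is absurd, and the genuine contradiction must come entirely from a blue triangle generated by $\tcc(L_m;i,j)=1$. Thus the real work is to show $\tcc(L_m;i,j)=1$ forces a color-$1$ edge $\cc(\{\alpha_m,\alpha_{m'}\})=1$ for a suitable $m'$ (so that, with a common color-$1$ target $\beta\in F(\omega^2\cdot i)^r_j$, we get the blue $3$), and the only route to that is: the node $L_m$ cannot be red-homogeneously isolated from \emph{all} other $C$-nodes without, together with $\{\omega^2\cdot i\}$ and $n-2$ further red-homogeneous $\alpha_{m'}$'s and the color-$0$ tail... which again requires color-$0$ tails. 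I would resolve this by carefully invoking Lemma~\ref{omegaplusnlemma2} with $k=i$, $\ell=j$, and the indices $i_1<\dots<i_{n-1}$ chosen among $\{n{+}1,\dots,n{+}K\}$ to \emph{include} $m$: its hypotheses are $\tcc(L_{i_{t'}};L_{i_t})=\tcc(L_{i_t};i,j)=\tcc(L_{i_t};L_i)=0$; the middle and last batch hold for $t$ with $i_t\neq m$ only if we're in the ``otherwise'' branch, and for $i_t=m$ the clause $\tcc(L_m;i,j)=0$ \emph{fails by assumption}, which is exactly the leverage — so Lemma~\ref{omegaplusnlemma2} cannot be applied to that configuration, and tracing why it fails (some $\tcc(L_{i_{t'}};L_{i_t})=1$ with $i_t=m$, i.e. $\tcc(L_{m'};L_m)=1$, or $\tcc(L_m;\cdot)=1$ contributions) is what produces the forbidden blue triangle. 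I expect the cleanest write-up reproduces the red-$\omega{+}n$ construction of Lemma~\ref{omegaplusnlemma2} almost verbatim, replacing ``$\omega^2\cdot k$'' as the apex by a vertex reached via $\tcc(m,0;i,j)=1$, so that the color-$1$ hypothesis becomes an asset rather than an obstruction.
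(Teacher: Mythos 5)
You have correctly located several ingredients: that a blue triangle is the right mechanism, that Lemma~\ref{notrianglelemma}.d gives $\tcc(n+K,0;m,0)=0$, and that any red $\omega+n$ with tail in $F(\omega^2\cdot i)^r_j$ is blocked because every $\node{m'}{0}{\gamma}$ sends color $1$ down to that tail. You also correctly isolate the crux: $\tcc(L_m;i,j)=1$ must be shown to force some $\alpha'$ with $\cc(\{\omega^2\cdot n+\omega\cdot m,\alpha'\})=1$ and $\cc(\{\alpha',\beta\})=1$ for $\beta\in F(\omega^2\cdot i)^r_j$. But you never find the mechanism that produces such an $\alpha'$, and your final proposal --- to invoke Lemma~\ref{omegaplusnlemma2} with $k=i$, $\ell=j$ and trace why its hypotheses fail --- is a dead end: that lemma's conclusion is $\hcc(i,2,j)=1$, which does not contradict anything in the present hypotheses, and ``tracing why a lemma's hypotheses fail'' does not by itself generate a contradiction. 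Nothing in the setup of Lemma~\ref{omegaplusnlemma2} lets you recycle its red $\omega+n$ construction here, because that construction has its tail and apex at the $\omega^2$ level, exactly where the hypothesis $\tcc(m,0;i,j)=1$ poisons everything.

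The missing idea is a dichotomy proved by a \emph{different} red $\omega+n$, living entirely in the $\omega\cdot K$ block. For each $n+1\le m\le n+K$, one shows $\hcc(m,1,0)=1$ or $\tcc(n+K,0;L_m)=1$: if both were $0$, then a cofinal red subset of $\node{m}{0}{\gamma}$ (the $\omega$-tail), together with the single point $\omega^2\cdot n+\omega\cdot m$ (the limit, colored $0$ to the tail by $\hcc(m,1,0)=0$) and $n-1$ red-homogeneous points of $\node{n+K}{0}{\gamma}$ (colored $0$ to the tail by Lemma~\ref{notrianglelemma}.d and to the limit by $\tcc(n+K,0;L_m)=0$), assembles a red homogeneous closed copy of $\omega+n$ --- contradiction. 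With the dichotomy in hand, both branches immediately forbid $\tcc(L_m;i,j)=1$ for $m<n+K$: in the branch $\hcc(m,1,0)=1$ take $\alpha'\in\node{m}{0}{\gamma}$, and in the branch $\tcc(n+K,0;L_m)=1$ take $\alpha'\in\node{n+K}{0}{\gamma}$; in either case $\{\omega^2\cdot n+\omega\cdot m,\alpha',\beta\}$ is a blue triangle for suitable $\beta\in F(\omega^2\cdot i)^r_j$. Your write-up gestures at replacing the apex ``by a vertex reached via $\tcc(m,0;i,j)=1$'' but does not identify $\node{n+K}{0}{\gamma}$ as the source of the finite top part nor the dichotomy that makes the blue-triangle case analysis go through, so the argument as written is incomplete.
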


\begin{proof} Assume that $\tcc(m,0;i,j)=1$ whenever $n+1 \leq m \leq n+K$. It follows from Lemma \ref{notrianglelemma}.d that $\tcc(n+K,0;m,0)=0$ for all $n+1 \leq m \leq n+K$.

We claim that $\hcc(m,1,0)=1$ or $\tcc(n+K,0;L_m)=1$ for all $n+1 \leq m \leq n+K$. Suppose towards a contradiction that $\hcc(m,1,0)=\tcc(n+K,0;L_m)=0$ for some $n+1 \leq m \leq n+K$. Then, using the ideas in the proof of Lemma \ref{omegaplusnlemma}, one can construct a red homogeneous closed copy of $\omega+n$ inside the set $$\node{m}{0}{\gamma} \cup \{\omega^2 \cdot n + \omega \cdot m\} \cup \node{n+K}{0}{\gamma}$$
which leads to a contradiction.

Let $n+1 \leq m < n+K$. If $\hcc(m,1,0)=1$, then we must have $\tcc(L_m;i,j)=0$ since, otherwise, having $\tcc(m,0;i,j)=\tcc(L_m;i,j)=\hcc(m,1,0)=1$ would create a blue homogeneous copy of $3$. If $\tcc(n+K,0;L_m)=1$, then we must have $\tcc(L_m;i,j)=0$ since, otherwise, having $\tcc(n+K,0;L_m)=\tcc(n+K,0;i,j)=\tcc(L_m;i,j)=1$ would create a blue homogeneous copy of $3$. (We do not explicitly write the arguments for these claims as they can be done imitating the proof of Lemma \ref{notrianglelemma}.) Therefore, in either case, we have that $\tcc(L_m;i,j)=0$.
\end{proof}

In what follows, for each $1 \leq i < n$, we set $\overline{A_i}=L_i$ and $\overline{L_i}=A_i$. 

\begin{lemma}\label{mainlemma-2} Let $1 \leq k < i \leq n$ and $X \in \{A_k,L_k\}$. If $\tcc(L_i;X)=1$, then $\tcc(L_m;\overline{X})=0$ for all $n+1 \leq m < n+K$.
\end{lemma}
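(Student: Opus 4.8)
The plan is to reduce the statement to Lemma \ref{mainlemma}. Write $X=(k,\ell_X)$ and $\overline{X}=(k,\ell_{\overline{X}})$; since $A_k$ is the unique pair $(k,j)$ with $\hcc(k,2,j)=0$, the two-element set $\{\ell_X,\ell_{\overline{X}}\}$ is $\{j,2\}$ for this particular $j$. By Corollary \ref{maincorollary} there is a (unique) level $\ell_0\in\{0,1\}$ with $\tcc(i,\ell_0;X)=1$. My first claim would be that $\hcc(i,2,\ell_0)=0$. Suppose not. Every $\beta_i\in\node{i}{\ell_0}{\gamma}$ satisfies $\omega^2\cdot i=\beta_i+\omega^2$ with $2>CB(\beta_i)=\ell_0$, so $\beta_i<^*\omega^2\cdot i$, and normality gives $\cc(\{\omega^2\cdot i,\beta_i\})=\hcc(i,2,\ell_0)=1$. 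Fixing such a $\beta_i$ and invoking $\tcc(L_i;X)=1$ and $\tcc(i,\ell_0;X)=1$, one can choose $\beta\in F(\omega^2\cdot k)^r_{\ell_X}$ with $r$ large enough that $\cc(\{\omega^2\cdot i,\beta\})=\cc(\{\beta_i,\beta\})=1$; then $\{\omega^2\cdot i,\beta_i,\beta\}$ would be a blue homogeneous copy of $3$, a contradiction. Note that this argument never mentions $A_i$, so it makes sense for every $1\le i\le n$.

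Next I would show $\tcc(m,0;X)=0$ for every $n+1\le m\le n+K$. Assume towards a contradiction that $\tcc(m,0;X)=1$ for some such $m$. Since $\hcc(i,2,\ell_0)=0$, Lemma \ref{omegaplusnlemma}.b, applied with $i$ in place of $k$ and $\ell_0$ in place of $\ell$, gives that $\tcc(m,0;i,\ell_0)=1$ or $\tcc(m,0;L_i)=1$. In the first case, the equalities $\tcc(m,0;i,\ell_0)=1$, $\tcc(i,\ell_0;X)=1$ and $\tcc(m,0;X)=1$ let us fix $\alpha_m\in\node{m}{0}{\gamma}$ and then pick successively $\beta_i$ and $\beta$ in the relevant $F$-subsets of $\node{i}{\ell_0}{\gamma}$ and $\node{k}{\ell_X}{\gamma}$ with sufficiently large parameters, so that $\cc(\{\alpha_m,\beta_i\})=\cc(\{\beta_i,\beta\})=\cc(\{\alpha_m,\beta\})=1$; as $\alpha_m$, $\beta_i$, $\beta$ lie in three distinct components of the $<^*$-forest, the set $\{\alpha_m,\beta_i,\beta\}$ is a blue homogeneous copy of $3$, a contradiction. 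The second case is identical, with $\omega^2\cdot i$ in place of $\beta_i$ and $\tcc(L_i;X)=1$ in place of $\tcc(i,\ell_0;X)=1$.

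To conclude, fix $n+1\le m\le n+K$. By Corollary \ref{maincorollary-2} we have $\tcc(m,0;A_k)=1$ or $\tcc(m,0;L_k)=1$; since $\{A_k,L_k\}=\{X,\overline{X}\}$ and $\tcc(m,0;X)=0$ by the previous step, this forces $\tcc(m,0;\overline{X})=1$. As $\overline{X}=(k,\ell_{\overline{X}})$ with $1\le k<n$ and $0\le\ell_{\overline{X}}\le2$, Lemma \ref{mainlemma} now applies and yields $\tcc(L_m;\overline{X})=0$ for every $n+1\le m<n+K$, which is precisely the assertion.

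I expect the bulk of the effort to lie in the routine verifications suppressed in the first two paragraphs: for each exhibited triple one must check that its members occupy three distinct components of the $<^*$-forest (so that all three pairs are governed by $\tcc$, apart from the single $<^*$-related pair governed by $\hcc$) and then choose the various $F$-set parameters so that every pair actually receives colour $1$. A point one should keep in mind is the edge case $i=n$ with $\hcc(n,2,0)=\hcc(n,2,1)=0$, in which $A_n$ is not defined; the argument above is deliberately phrased so as not to refer to $A_i$, relying only on the fact that $\ell_0$ is a level with $\tcc(i,\ell_0;X)=1$ and that $\hcc(i,2,\ell_0)=0$, so it still goes through there.
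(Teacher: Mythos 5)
Your proof is correct and follows essentially the same route as the paper's: establish $\tcc(m,0;X)=0$ for all $n+1\le m\le n+K$ via blue-triangle avoidance through level $i$, deduce $\tcc(m,0;\overline{X})=1$ from Corollary \ref{maincorollary-2}, and finish with Lemma \ref{mainlemma}. The only difference is cosmetic: by isolating the level $\ell_0$ with $\tcc(i,\ell_0;X)=1$ and showing $\hcc(i,2,\ell_0)=0$ directly, you handle uniformly the two cases ($i=n$ with $\hcc(n,2,0)=\hcc(n,2,1)=0$ versus the existence of $A_i$) that the paper treats separately.
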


\begin{proof} Assume that $\tcc(L_i;X)=1$. We split into two cases depending on whether or not we have $\hcc(i,2,j)=1$  for some $0 \leq j \leq 1$.

Suppose that $\hcc(i,2,0)=\hcc(i,2,1)=0$, in which case we must have $i=n$ by Lemma \ref{lbconnection}. Now, Lemma \ref{omegaplusnlemma}.b implies that, $\tcc(m,0;L_n)=1$ or  $\tcc(m,0;n, 0)=1$, and $\tcc(m,0;L_n)=1$ or   $\tcc(m,0;n, 1)=1$. But from Lemma \ref{notrianglelemma}.a$'$, we have  $\tcc(m,0;n, 0)=0$ or $\tcc(m,0;n, 1)=0$. So $\tcc(m,0;L_n)=1$ for all $n+1 \leq m \leq n+K$. Consequently, in order to avoid  a blue homogeneous copy of $3$, we must have $\tcc(m,0;X)=0$ for all $n+1 \leq m \leq n+K$. But then, by Lemma \ref{omegaplusnlemma}.b, we must have $\tcc(m,0;\overline{X})=1$ for all $n+1 \leq m \leq n+K$. Consequently, Lemma \ref{mainlemma} implies that $\tcc(L_m;\overline{X})=0$ for all $n+1 \leq m < n+K$.

Now suppose that $\hcc(i,2,0)=1$ or $\hcc(i,2,1)=1$. We would like to remark that we may or may not have $i=n$. Even if $i=n$, let us name the pairs $(i,0)$ and $(i,1)$ by $A_i$ and $B_i$ in such a way that $\tcc(L_i;A_i)=0$ and $\tcc(L_i,B_i)=1$. By Lemma \ref{omegaplusnlemma}.b, we must have that $\tcc(m,0;A_i)=1$ or $\tcc(m,0;L_i)=1$ for all $n+1 \leq m \leq n+K$.

On the other hand, as $\tcc(L_i;X)=1$, we cannot have $\tcc(B_i;X)=1$ in order for there not to be a blue homogeneous $3$. So, by Corollary \ref{maincorollary}, we have $\tcc(A_i;X)=1$. By Lemma \ref{omegaplusnlemma}.b, we have $\tcc(m,0;L_i)=1$ or  $\tcc(m,0;A_i)=1$ for all $n+1 \leq m \leq n+K$. We also have $\tcc(A_i;X)=\tcc(L_i;X)=1$ and hence, the non-existence of a blue homogeneous copy of $3$ now gives us that $\tcc(m,0;X)=0$ for all $n+1 \leq m \leq n+K$. Now applying Lemma \ref{omegaplusnlemma}.b, we must have $\tcc(m,0;\overline{X})=1$ for all $n+1 \leq m \leq n+K$. Subsequently, by Lemma \ref{mainlemma}, we have $\tcc(L_m;\overline{X})=0$ for all $n+1 \leq m < n+K$.
\end{proof}

We are now ready to prove our second main theorem.

\begin{proof}[Proof of Theorem \ref{mainresult-upper}] Let $n \geq 3$ be an integer. Set $K=R(2n-3,3)+1$ and $\gamma=\omega^2 \cdot n + \omega \cdot K+1$. Let $\cc: [\gamma]^2 \rightarrow \{0,1\}$ be a coloring. We wish to show that there exist a red homogeneous closed copy of $\omega+n$ or a blue homogeneous (necessarily closed) copy of $3=\{0,1,2\}$. By the remarks at the end of Section 2, we may assume without loss of generality that $\cc$ is canonical and $\omega$-homogeneous. Assume to the contrary that such homogeneous sets do not exist. Then Lemma \ref{notrianglelemma}-\ref{mainlemma-2} and Corollary \ref{maincorollary-2}-\ref{maincorollary} all hold.

It is clear that $\mathbf{[\boldsymbol{\gamma};A_1]} \cup \{\omega^2\}$ is a red homogeneous copy of $\omega+1$. By assumption, the set $\mathbf{[\boldsymbol{\gamma};A_1]} \cup \{\omega^2 \cdot 1, \omega^2 \cdot 2, \dots, \omega^2 \cdot n\}$, which is a closed copy of $\omega+n$, is not red homogeneous. Consequently, there exists $2 \leq i \leq n$ such that $\tcc(L_i;X)=1$ for some $X \in \{A_1,L_1,L_2,\dots,L_{i-1}\}$, since, otherwise, we would obtain a contradiction by Lemma \ref{omegaplusnlemma2}.

By Corollary \ref{maincorollary}, we have $\tcc(Y,X)=1$ and $\tcc(Y,\overline{X})=0$ where $Y=(i,j)$ for some $0 \leq j \leq 1$. Since there is no blue homogeneous copy of $3$ in $\gamma$, by the definition of  $R(2n-3,3)$, there exist $2n-3$ indices $n+1 \leq m_1 < m_2 < \dots < m_{2n-3} \leq n+K$ such that $$\bigcup_{t=1}^{2n-3} \mathbf{[\boldsymbol{\gamma};L_{m_t}]}$$ is red homogeneous. Now, by the pigeonhole principle, we can find $n-1$ indices $n+1 \leq m'_1 < m'_2 < \dots < m'_{n-1} \leq n+K$ such that
\begin{itemize}
\item $\tcc(L_{m'_t},X)=0$ for all $1 \leq t \leq n-1$ or
\item $\tcc(L_{m'_t},X)=1$ for all $1 \leq t \leq n-1$.
\end{itemize}
Applying Lemma \ref{mainlemma-2}, we see that $\tcc(L_m;\overline{X})=0$ for all $n+1 \leq m < n+K$. So the first case directly contradicts Lemma \ref{omegaplusnlemma2}. Thus the second case holds. But then, since $\tcc(Y,X)=1$, we must have $\tcc(L_{m'_t},Y)=0$ for all $1 \leq t \leq n-1$. Similarly, since $\tcc(L_i,X)=1$, we must have $\tcc(L_{m'_t},L_i)=0$ for all $1 \leq t \leq n-1$. (Otherwise, one can create a blue homogeneous copy of $3$.) On the other hand, since $\tcc(L_i;X)=\tcc(Y;X)=1$, in order for there not to be a blue homogeneous copy of $3$, we must have $\hcc(i,2,j)=0$ where $Y = (i,j)$. Together with the previous observation that $\tcc(L_{m'_t},Y) = \tcc(L_{m'_t},L_i)=0$ for all $1 \leq t \leq n-1$, this contradicts Lemma \ref{omegaplusnlemma2}, which completes the proof.
\end{proof}
\section{Yet another upper bound}

In this section, we shall prove Theorem \ref{mainresult-upper-2} using the ideas that are employed in Section 4. For this reason, we retain all the notation introduced in Section 4.

Analyzing the proof of Theorem \ref{mainresult-upper}, one sees that the whole proof is based on the following phenomenon: If $\mathbf{[\boldsymbol{\gamma};A_1]} \cup \{\omega^2 \cdot 1, \omega^2 \cdot 2, \dots, \omega^2 \cdot n\}$ fails to be red homogeneous, then having sufficiently many $L_i$'s with $n<i$ whose corresponding elements form a red homogeneous set automatically creates a red homogeneous closed copy of $\omega+n$. In that proof, such $L_i$'s were extracted using Ramsey numbers. In the proof of Theorem \ref{mainresult-upper-2}, we shall take another approach to create such $L_i$'s.

\begin{proof}[Proof of Theorem \ref{mainresult-upper-2}] Let $n \geq 3$ be an integer and set $\gamma=\omega^2 \cdot n + \omega \cdot (n^2-4)+1$. Let $\cc: [\gamma]^2 \rightarrow \{0,1\}$ be a canonical $\omega$-homogeneous. Assume to the contrary no red homogeneous closed copy of $\omega+n$ and no blue homogeneous copy of $3$ exist. Then Lemma \ref{notrianglelemma}-\ref{mainlemma-2} and Corollary \ref{maincorollary-2}-\ref{maincorollary} all hold.

As before, there must exist $2 \leq i \leq n$ and $X \in \{A_1,L_1,L_2,\dots,L_{i-1}\}$ such that $\tcc(L_i;X)=1$. Take the least such $i$ and such $X$. We shall now split into two cases depending on whether $i=n$ or $i<n$.

Suppose that $i<n$. By the non-existence of a blue homogeneous $3$, we must have $\tcc(B_i;X)=0$ since $\tcc(L_i;X)=1$. As $\tcc(B_i;X)=0$, we obtain $\tcc(A_i;X)=1$ by Corollary \ref{maincorollary}. On the other hand, we also have by Corollary \ref{maincorollary} that $$\tcc(i+1,j;L_i)=\tcc(i+1,1-j;A_i)=1$$ for some $0 \leq j \leq 1$. Thus $$\tcc(i+1,j;L_i)=\tcc(L_i;X)=\tcc(i+1,1-j;A_i)=\tcc(A_i;X)=1$$
But then, it follows that $$\tcc(i+1,j;X)=\tcc(i+1,1-j,X)=0$$ because, otherwise, one can construct a blue homogeneous copy of $3$. However, the last equality contradicts Corollary \ref{maincorollary}.

Now suppose that $i=n$. Set $W_X=\{L_j: \tcc(L_j;X)=1,\ n+1 \leq j \leq n+(n^2-4)\}$ and consider the sets
\begin{align*}
W&=\{L_j: \tcc(L_j;X)=0,\ n+1 \leq j \leq n+(n^2-4)\}\\
W_{n-1}&=\{L_j \in W: \tcc(L_j;A_{n-1})=1 \text{ or } \tcc(L_j;L_{n-1})=1\}\\
W_{n-2}&=\{L_j \in W: L_j \notin W_{n-1},\ \tcc(L_j;A_{n-2})=1 \text{ or } \tcc(L_j;L_{n-2})=1\}\\
\dots&\\
W_{k}&=\left\{L_j \in W: L_j \notin \left(\bigcup_{m=k+1}^{n-1} W_{m} \right),\ \tcc(L_j;A_{k})=1 \text{ or } \tcc(L_j;L_k)=1\right\}\\
\dots&\\
W_{1}&=\left\{L_j \in W: L_i \notin \left(\bigcup_{m=2}^{n-1} W_{m} \right),\ \tcc(L_j;A_{1})=1 \text{ or } \tcc(L_j;L_1)=1\right\}
\end{align*}
Recall that $i$ was chosen to be the least integer with its property. It follows that the set $\mathbf{[\boldsymbol{\gamma};A_1]} \cup \{\omega^2 \cdot 1, \omega^2 \cdot 2, \dots, \omega^2 \cdot (n-1)\}$ is a red homogeneous closed copy of $\omega+(n-1)$. Consequently, in order for there not be a red homogeneous closed copy of $\omega+n$, we must have that, for every $n+1 \leq j \leq n+(n^2-4)$, there exists $Y \in \{A_1,L_1,L_2,\dots,L_{n-1}\}$ such that $\tcc(L_j;Y)=1$. It follows that
\[ W_X \cup \left(\bigcup_{m=1}^{n-1} W_m\right) = \{L_j: n+1 \leq j \leq n+(n^2-4)\}\]
Next will be made some important observations.
\begin{itemize}
\item Let $1 \leq k \leq n-2$. Suppose that $W_k$ has at least $(2k+1)$ elements. Then, by the pigeonhole principle, we must have that $\tcc(L_j;A_{k})=1$ for at least $k+1$ of these elements or $\tcc(L_j;L_{k})=1$ for at least $k+1$ of these elements. In either case, since there is no blue homogeneous copy of $3$, there exists $k+1$ elements $L_{m_1},L_{m_2},\dots,L_{m_{k+1}}$ in $W_k$ such that $\tcc(L_{m_t};L_{m_{t'}})=0$ whenever $1 \leq t \neq t' \leq k+1$. Recall that $\tcc(L_{j};L_{j'})=0$ for all $1 \leq j \neq j' <n$. Moreover, because $L_{m_t} \in W_k$, we have that $\tcc(L_{m_t};A_{k+1})=0$ and that $\tcc(L_{m_t};L_{j})=0$ for all $1 \leq t \leq k+1$ and all $k+1 \leq j \leq n-1$. This contradicts Lemma \ref{omegaplusnlemma2}.
\item Suppose that $W_{n-1}$ has at least $(2n-3)$ elements. Then, as above, there are $L_{m_1},L_{m_2},\dots,L_{m_{n-1}}$ in $W_{n-1}$ such that $\tcc(L_{m_t};L_{m_{t'}})=0$ whenever $1 \leq t \neq t' \leq n-1$. Since $W_{n-1} \subseteq W$, we have that $\tcc(L_{m_t};X)=0$ for all $1 \leq t \leq n-1$. Moreover, by Lemma \ref{mainlemma-2}, we have $\tcc(L_{m_t};\overline{X})=0$ for all $1 \leq t \leq n-1$. This contradicts Lemma \ref{omegaplusnlemma2}.
\item Suppose that $W_X$ has at least $n-1$ elements, say, $L_{m_1},L_{m_2},\dots,L_{m_{n-1}}$. Then, since $\tcc(L_{m_t};X)=\tcc(L_n;X)=1$ for all $1 \leq t \leq n-1$, we must have that $\tcc(L_{m_t};L_{m_{t'}})=\tcc(L_{m_t};L_n)=0$ whenever $1 \leq t \neq t' \leq n-1$ because there is no blue homogeneous $3$. By Corollary \ref{maincorollary}, there exists $0 \leq \ell \leq 1$ such that $\tcc(n,\ell;X)=1$. Again, by the non-existence of a blue homogeneous $3$, we obtain that $\tcc(L_{m_t};n,\ell)=\hcc(n,2,\ell)=0$ for all $1 \leq t \leq n-1$. Together with previous observations, this contradicts Lemma \ref{omegaplusnlemma2}.
\end{itemize}
Therefore, $W_k$ has at most $2k$ elements for all $1 \leq k \leq n-2$, $W_{n-1}$ has at most $2n-4$ elements and $W_X$ has at most $n-2$ elements. This means that $W_X \cup \left(\cup_{m=1}^{n-1} W_m\right)$ can have at most
\[ \left(\sum_{k=1}^{n-2} 2k \right) + (2n-4) + (n-2) =(n-2)(n-1)+(2n-4) + (n-2)=n^2-4\]
elements. We will now argue that it can indeed have at most $n^2-5$ elements. Recall that $X \in \{A_1,L_1,L_2,\dots,L_{n-1} \}$. Consequently, the corresponding $W_k$ is either empty or equal to $\{L_{n^2-4}\}$, because, by Lemma \ref{mainlemma-2}, we have $\tcc(L_m;\overline{X})=0$ for all $n+1 \leq m < n+(n^2-4)$. Thus, the corresponding $W_k$ can have at most one element instead of at most $2k$ elements. So the union $W_X \cup \left(\cup_{m=1}^{n-1} W_m\right)$ can indeed have at most $n^2-4-(2k-1)$ elements.

Letting $k=1$, we obtain that $W_X \cup \left(\cup_{m=1}^{n-1} W_m\right)$ can actually have at most $n^2-5$ elements. However, we had
$W_X \cup \bigcup_{m=1}^{n-1} W_m = \{L_j: n+1 \leq j \leq n+(n^2-4) \}$ which now gives a contradiction since the right-hand side has $n^2-4$ elements. This completes the proof.\end{proof}

\bibliography{references}{}
\bibliographystyle{amsalpha}

\end{document}